\title[The families SW invariants of smooth families of K\"ahler surfaces]{The families Seiberg-Witten invariants of smooth families of K\"ahler surfaces}
\author{Joshua Celeste}
\date{\today}
\edef\restoreparindent{\parindent=\the\parindent\relax}
\newcommand{\Z}{\mathbb{Z}}
\newcommand{\R}{\mathbb{R}}
\newcommand{\C}{\mathbb{C}}
\renewcommand{\P}{\mathbb{P}}
\newcommand{\partialbar}{   
	\overline{\partial}
}
\newcommand\SW{\text{SW}}
\newcommand \FSW{\text{FSW}}
\theoremstyle{plain}
\newtheorem{theorem}{Theorem}[section]
\newtheorem{lemma}[theorem]{Lemma}
\newtheorem{corollary}[theorem]{Corollary}
\newtheorem{proposition}[theorem]{Proposition}
\theoremstyle{definition}
\newtheorem{remark}[theorem]{Remark}
\newtheorem{example}[theorem]{Example}
\theoremstyle{remark}
\begin{document}
	\begin{abstract}
		We consider a generalisation of the Seiberg-Witten invariant to the families Seiberg-Witten invariants of a smooth family of 4-manifolds with fibres diffeomorphic to a 4-manifold \(X\). Of particular interest is the special case when the family has a smoothly varying K\"ahler structure. We obtain a general computation of the invariants when \(b_1=0\) in terms of characteristic classes of some vector bundles corresponding to the cohomology groups of holomorphic line bundles of the family. Finally, we apply the formula to some examples of K\"ahler families where some more further explicit computations can be made. We consider a family of \(\C\P^2\)'s obtained from the projectivisation of a rank 3 complex vector bundle, a family of \(\C\P^1\times \C\P^1\)'s obtained as the fibre product of the projectivisation of two rank 2 complex vector bundles and a family with fibres being the blowup of a K\"ahler surface at a point known as the universal blowup family.
	\end{abstract}
	\maketitle
	\section{Introduction}
	\label{sec:1}
	The Seiberg-Witten invariant has proven useful in studying smooth 4-manifolds, particularly in regards to compact K\"ahler surfaces where the invariant may be computed. On a K\"ahler surface, there is a canonical \(\text{spin}^c\) structure, with any other \(\text{spin}^c\) structure obtained via a choice of Hermitian line bundle \(L\). One may then compute the Seiberg-Witten invariant when \(b_1=0\) in terms of the dimensions of the cohomology groups of \(L\).
	
	The Seiberg-Witten invariant can be extended for application to smooth families of 4-manifolds, this idea was first suggested by Donaldson in \cite{donaldson_seiberg-witten_1996} and applied to some specific families in \cite{ruberman_obstruction_1998}, \cite{ruberman_positive_2002}, \cite{nishinou_nontrivial_2002}. The idea of a families Seiberg-Witten invariant was then pursued in further generality by \cite{li_family_2001}. We shall concern ourselves with an even more general construction as seen in \cite{baraglia_obstructions_2019} and \cite{baraglia_bauerfuruta_2022}, where for each \(n\ge 0\), one may define the families Seiberg-Witten invariants which are valued in the cohomology ring of the base of the family.
	
	Consider a smooth family of 4-manifolds \(X\hookrightarrow E\rightarrow B\) with base \(B\) and fibres \(X_b\) diffeomorphic to some \(4\)-manifold \(X\). If the family admits a smoothly varying K\"ahler structure, one can obtain a canonical \(\text{spin}^c\) structure on the vertical tangent bundle. Any other \(\text{spin}^c\) structure for the family is then obtained via tensoring by a Hermitian line bundle. Similar techniques to the unparametrised case can be applied to obtain the main result of this paper, namely a computation of the families Seiberg-Witten invariants for K\"ahler families when \(b_1(X)=0\) and the dimensions of the cohomology groups \(H^i(X_b,L_b)\) are constant for \(b\in B\). That is, we have the following theorem.
	
	\begin{theorem}
		\label{thm:mainthm}
		Let \(X\hookrightarrow E\rightarrow X\) be a smooth family of compact K\"ahler surfaces with a smoothly varying K\"ahler structure and \(b_1(X)=0\) such that the following assumptions hold
		\begin{enumerate}
			\item For all \(b\in B\), the line bundle \(L_b\) has first Chern class \(c_1(L_b)\) which is represented by a \((1,1)\) form. \\
			\item The dimensions \(h^i\) of \(H^i(X_b,L_b)\) for \(i=0,1,2\) are independent of \(b\).
		\end{enumerate}
		Define \(\delta=m+n-h^0+1\), the families Seiberg-Witten invariants in the K\"ahler chamber are given by
		\begin{equation}
			\label{eqn:maineqn}
			\FSW_n(E,L)=\sum_{m=0}^{h^1-h^2+\rho_g}c_{h^1-h^2+\rho_g-m}(H^{2,0})\Gamma_{m,n}
		\end{equation}
		where \(\Gamma_{m,n}\in H^{2\delta}(B;\Z)\) is given by
		\begin{align}
			\Gamma_{m,n}&=(-1)^n\sum_{i=\max\{\delta-m,0\}}^{\delta}\sum_{j=0}^{\min\{i,m-\delta+i\}}s_j(V^2)c_{\delta-i}(V^1)s_{i-j}(V^0){h^1-h^2-\delta+i-j\choose m-\delta+i-j}.
		\end{align}
		for \(1\le h^0\le m+n+1\) and is zero otherwise.
		
		Furthermore, the \(\Gamma_{m,n}\) satisfy the following recursion relation
		\[\Gamma_{m,n+h^0} - c_1(V^0)\Gamma_{m,n+h^0-1} + c_2(V^0)\Gamma_{m,n+h^0-2}+\dots = 0.\]
	\end{theorem}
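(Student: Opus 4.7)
The plan is to adapt the unparametrised Kähler Seiberg--Witten computation to the families setting, using the bundles $V^i = R^i\pi_*\mathcal{L}$ that are well-defined globally on $B$ by assumption (2). First I would analyse the parametrised Seiberg--Witten equations in the Kähler chamber: via the usual Kähler decomposition of the $\text{spin}^c$ spinor bundle into $L\oplus(\Lambda^{0,2}\otimes L)$, a Kähler-chamber solution on a fibre $X_b$ reduces, modulo gauge, to a holomorphic section $\alpha\in H^0(X_b,L_b)$ taken up to scale. Constancy of the $h^i$ ensures the $V^i$ are locally free, so the parametrised moduli space assembles into the projective bundle $\mathcal{M}=\mathbb{P}(V^0)\to B$. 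The constraint $h^0\ge 1$ in the theorem is exactly the nonemptiness condition for this moduli, so the invariant vanishes otherwise.

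Next I would identify the integrand. The $\mu$-class restricts on each fibre to the hyperplane class $\eta=c_1(\mathcal{O}_{\mathbb{P}(V^0)}(1))$. The gap between $\dim\mathcal{M}$ and the virtual dimension is filled by an obstruction bundle, built from $V^1$ (the linearised $\overline\partial$-obstruction, contributing via $c_{\bullet}(V^1)$), $V^2$ (secondary obstruction/Serre dual, contributing via Segre classes), and the bundle $H^{2,0}$ of fibrewise holomorphic two-forms; the last enters because in the Kähler chamber the natural perturbation varies fibrewise in $H^{2,0}(X_b)$, contributing a factor $c_{\bullet}(H^{2,0})$ to the virtual cycle. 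Writing
\[\FSW_n(E,L)=\pi_*\bigl(\eta^n\cdot e(\mathrm{Obs})\bigr)\]
and applying the projective-bundle pushforward $\pi_*(\eta^{h^0-1+k})=s_k(V^0)$ converts the integral into the stated double sum: the $s_j(V^2)$, $c_{\delta-i}(V^1)$ and $s_{i-j}(V^0)$ factors arise from the multilinear expansion of $e(\mathrm{Obs})$ followed by fibre integration, and the binomial coefficient comes from repeatedly reducing higher powers of $\eta$ modulo the Grothendieck relation. A bookkeeping of degrees confirms $\Gamma_{m,n}\in H^{2\delta}(B;\Z)$ with $\delta=m+n-h^0+1$ being the codimension of the pushforward.

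The recursion is then automatic from the very fact that $\mathcal{M}=\mathbb{P}(V^0)$. The Grothendieck relation on $\mathbb{P}(V^0)$ reads
\[\eta^{h^0}-c_1(V^0)\eta^{h^0-1}+c_2(V^0)\eta^{h^0-2}-\cdots+(-1)^{h^0}c_{h^0}(V^0)=0.\]
Multiplying by $\eta^{n}\cdot e(\mathrm{Obs})$, pushing forward to $B$, and reading off the coefficient of $c_{h^1-h^2+\rho_g-m}(H^{2,0})$ in the expansion of $\FSW_{n+h^0}(E,L)$ gives exactly the stated relation between $\Gamma_{m,n+h^0},\,\Gamma_{m,n+h^0-1},\ldots$

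The main obstacle will be the parametric Kuranishi/transversality step, namely showing (a) that the families moduli in the Kähler chamber is globally $\mathbb{P}(V^0)$ and not merely fibrewise so, and (b) that the excess/obstruction bundle is exactly the one whose Euler class expands into the claimed polynomial in $c_{\bullet}(V^1)$, $s_{\bullet}(V^2)$ and $c_{\bullet}(H^{2,0})$. Once those identifications are established, both the closed formula \eqref{eqn:maineqn} and the recursion reduce to formal manipulation of Chern and Segre classes on a projective bundle.
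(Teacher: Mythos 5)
Your proposal follows essentially the same route as the paper: identify the Kähler-chamber moduli space as \(\P(V^0)\) via the fibrewise Bradlow/vortex correspondence, express \(\FSW_n\) as \(\pi_*(y^n e(\mathrm{Obs}))\), push forward with \(\pi_*(x^{h^0-1+k})=s_k(V^0)\), and derive the recursion from the Grothendieck relation. That is exactly the paper's strategy, and your degree bookkeeping and vanishing conditions are consistent with it.

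However, the step you defer as ``the main obstacle'' is where all the content of the theorem lives, and your sketch of it contains an inaccuracy. The obstruction bundle is pinned down not by any perturbation-theoretic consideration but by Mrowka's long exact sequence for the linearised Kähler Seiberg--Witten operator: fibrewise one gets \(0\to H^1(X_b,L_b)\to \mathrm{coker}(\mathcal{D})\to H^2(X_b,\mathcal{O})\to H^2(X_b,L_b)\to 0\) (using \(H^1(X_b,\mathcal{O})=0\) since \(b_1=0\)), which globalises to \(0\to \pi^*V^1\otimes\mathcal{O}_{V^0}(1)\to \mathrm{Obs}\to \pi^*H^{2,0}\to \pi^*V^2\otimes\mathcal{O}_{V^0}(1)\to 0\) after quotienting by the residual \(S^1\). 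Your explanation that \(H^{2,0}\) enters ``because the perturbation varies fibrewise in \(H^{2,0}(X_b)\)'' is not the mechanism; it enters as the \(H^2(X_b,\mathcal{O})\) term of this sequence, and crucially it is \emph{untwisted} while \(V^1\) and \(V^2\) acquire \(\mathcal{O}_{V^0}(1)\)-twists from the \(S^1\)-action on sections of \(L_b\). Those twists are what generate the powers of \(x\) that survive the fibre integration and, via the tensor-product formulae for Chern and Segre classes together with the Vandermonde--Chu identity, produce the binomial coefficient \(\binom{h^1-h^2-\delta+i-j}{m-\delta+i-j}\); attributing the binomial coefficients solely to reduction modulo the Grothendieck relation misses this. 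Without the exact sequence and the twists, the formula for \(\Gamma_{m,n}\) cannot be recovered, so as written the proposal is a correct outline with the decisive lemma still missing.
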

	
	The structure of the paper is as follows. We begin in \Cref{sec:2} with a brief review of the computation of the Seiberg-Witten invariant for K\"ahler surfaces in the unparametrised case via the obstruction bundle technique. This short overview is instructive since many of the methods apply more generally to the families case. \Cref{sec:3} contains an overview of Seiberg-Witten theory in the families setting. We discuss the construction of the invariants when \(b_1(X)=0\). \Cref{sec:computation for general class of kahler families} then consists of the proof of the main theorem of the paper, \Cref{thm:mainthm}, the computation of the families Seiberg-Witten invariants for families of K\"ahler surfaces. The following sections then contain applications of the computation of the families Seiberg-Witten invariants for K\"ahler families to a series of examples. In \Cref{sec:examples 1} we consider two example families which have a similar means of obtaining an expression for the invariant. These families are a family with fibres diffeomorphic to \(\C\P^2\) obtained from the projectivisation of a rank 3 complex vector bundle, and a family with fibres diffeomorphic to \(\C\P^1\times\C\P^1\) obtained from the fibre product of the projectivisations of two rank 2 complex vector bundles. Finally, we conclude with a computation of the families Seiberg-Witten invariants for the universal blowup family. This family has base being a K\"ahler surface \(X\) and its fibre at a point \(x\) is the blowup of \(X\) at \(x\).
	\section{Review of the computation of the Seiberg-Witten invariant on K\"ahler surfaces}
	\label{sec:2}
	
	 In this section we briefly review the computation of the Seiberg-Witten invariant on K\"ahler surfaces with \(b_1=0\), for a more in depth treatment see \cite{salamon_spin_1999}. Recall that generally for a smooth compact oriented 4-manifold \(X\), given a choice of metric and \(\text{spin}^c\) structure \(\mathfrak{s}\) with associated line bundle \(\mathcal{L}_{\mathfrak{s}}\), the Seiberg-Witten moduli space is defined as the solution set of the Seiberg-Witten equations modulo the action of the gauge group \(\mathcal{G}=\text{Maps}(X,S^1)\). We choose a generic perturbation \(\eta\) so that the Seiberg-Witten equations in \((A,\Phi)\) do not admit reducible solutions with \(\Phi=0\) and the moduli space is a smooth compact oriented manifold of dimension
	 \[d:=d(X,\mathfrak{s})=\frac{\braket{c(L_{\mathfrak{s}})^2,[X]}-2\chi(X)-3\sigma(X)}{4}.\]
	 where \(\chi(X)\) is the Euler characteristic of \(X\) and \(\sigma(X)\) is its signature, defined as \(\sigma(X):=b^+-b^-\). The space of perturbations \(\mathcal{W}\) for which the Seiberg-Witten equations necessarily admits reducble solutions is called the wall  and is an affine subspace of codimension \(b^+\) in the space of perturbations. Consequently if \(b^+=1\), the complement of the wall is split into two connected components called chambers, and it is connected otherwise. We denote a choice of chamber by \(\phi\) when necessary, note that any two generic perturbations in a choice of chamber are path connected and there is a cobordism between the corresponding moduli spaces.
	 
	 If \(b_1(X)=0\), we may define the reduced gauge group 
	 \[\mathcal{G}_{0}:=\{g\in \mathcal{G}_b:g=e^{if}, f:X\longrightarrow \R,\int_{X}f\text{vol}_{X}=0\}\]
	 which fits into an exact sequence
	 \[0\rightarrow  \mathcal{G}_0\rightarrow \mathcal{G}\rightarrow S^1\rightarrow 0.\]
	 Define \(\widetilde{\mathcal{M}}\) to be the space of solutions to the Seiberg-Witten equations modulo the action of \(\mathcal{G}_0\), then there is a principle circle bundle \(\widetilde{\mathcal{M}}\rightarrow\mathcal{M}\) with associated line bundle \(\mathcal{L}\rightarrow\mathcal{M}\) defined by the action of the circle by multiplication. Let \(\tau=c_1(\mathcal{L})\in H^2(\mathcal{M},\Z)\), the Seiberg-Witten invariant is defined by
	 \[SW(X,\mathfrak{s}_L,\phi):=
	 \begin{cases}
	 	\int_{\mathcal{M}}\tau^{\frac{d}{2}} & \text{if }d\text{ is even} \\
	 	0 & \text{otherwise.}
	 \end{cases}
 	\]
 	Since there is a cobordism between any two moduli spaces, given a generic choice of metric and perturbation in a chamber, the Seiberg-Witten invariant of \(X\) only depends on the choice of \(\text{spin}^c\) structure when \(b^+>1\) and also a choice of chamber when \(b^+=1\).
	 
	 Now assume that \(X\) is a K\"ahler surface, the symplectic and almost-complex structures give rise to a canonical \(\text{spin}^c\) structure \(\mathfrak{s}_{\text{can}}\) with spinor bundle \(W_{\text{can}}\rightarrow X\). The Levi-Civita connection induces a canonical \(\text{spin}^c\) connection \(\nabla_{\text{can}}\) on \(W_{\text{can}}\), and any other \(\text{spin}^c\) structure is obtained via twisting by a Hermitian line bundle \(L\rightarrow X\). Let \(\mathcal{A}(L)\) denote the space of Hermitian connections on \(L\) and \(\mathcal{A}(\mathfrak{s}_L)\) denote the space of connections on \(\mathcal{L}_{\mathfrak{s}_L}=\text{det}(W_{\text{can}}\otimes L)=K_X^*\otimes L^2\), the latter is in correspondence with \(\text{spin}^c\) connections on \(W_{\text{can}}\otimes L\) that are compatible with the Levi-Civita connection. Given \(2A\in\mathcal{A}(L)\), there is a unique \(\text{spin}^c\) connection compatible with the Levi-Civita connection for the twisted \(\text{spin}^c\) structure \(\mathfrak{s}_L=\mathfrak{s}_{\text{can}}\otimes L\) and the induced connection on \(\mathcal{A}(\mathfrak{s}_L)\) is denoted \(A_{\text{can}}+2A\in\mathcal{A}(\mathfrak{s}_L)\). Note that the factor of 2 in front of \(A\) is simply to reduce unnecessary factors in the Seiberg-Witten equations. Since a \(\text{spin}^c\) structure on a K\"ahler surface \(X\) only depends on a choice of line bundle \(L\), we often write the Seiberg-Witten invariant of \(X\) corresponding to the \(\text{spin}^c\) structure \(\mathfrak{s}_L\) as \(\SW(X,L)\) when \(X\) is K\"ahler. The Seiberg-Witten equations in the K\"ahler case then simplify to the following form.
	
	\begin{proposition}
		\label{prop:kahler sw equations}
		Let \(X\) be a K\"ahler surface and \(\eta\in i\Omega^{2,+}(X)\), the perturbed Seiberg-Witten equations then acts on pairs \((A_{\text{can}}+2A,\Phi)\) and are given by
		\begin{align}
			\overline{\partial}_A\varphi_0+\overline{\partial}^*_A\varphi_2&=0 \label{kahler sw eqn 1} \\
			2(F_A+\eta)^{0,2}&=\overline{\varphi_0}\varphi_2 \label{kahler sw eqn 2}\\
			4i(F_{A_{\text{can}}}+F_A+\eta)_{\omega}&=|\varphi_2|^2-|\varphi_0|^2 \label{kahler sw eqn 3}
		\end{align}
		where \(\Phi=(\varphi_0,\varphi_2)\in\Omega^{0,0}(X,L)\times\Omega^{0,2}(X,L)\), \(\eta\) is a self-dual imaginary valued 2-form and for a 2-form \(\tau\in\Omega^2(X,\C)\), \(\tau_{\omega}:X\longrightarrow\C\) is defined by
		\[\omega\wedge\tau:=\tau_{\omega}\omega\wedge\omega.\]
	\end{proposition}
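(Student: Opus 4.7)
The plan is to specialise the general (perturbed) Seiberg-Witten equations to the Kähler setting by decomposing the positive spinor bundle and the self-dual two-forms into their Hodge summands, and then matching each summand to a component of the equations.

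First I would recall the general form of the perturbed Seiberg-Witten equations for a $\mathrm{spin}^c$ structure $\mathfrak{s}_L$ with connection $A_{\text{can}}+2A\in\mathcal{A}(\mathfrak{s}_L)$: the Dirac equation $D_{A_{\text{can}}+2A}\Phi=0$ and the curvature equation $\rho(F_{A_{\text{can}}+2A}^{+}+\eta)=\sigma(\Phi)$, where $\rho$ is Clifford multiplication on $i\Lambda^{+}$ and $\sigma$ is the quadratic map on spinors. On a Kähler surface, $W^{+}_{\text{can}}\otimes L\cong \Lambda^{0,0}(L)\oplus \Lambda^{0,2}(L)$ and $W^{-}_{\text{can}}\otimes L\cong \Lambda^{0,1}(L)$, so $\Phi$ has the form $(\varphi_{0},\varphi_{2})$ claimed in the statement.

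Next I would identify the Dirac operator with (a constant multiple of) $\overline\partial_{A}+\overline\partial^{\,*}_{A}$ under the Kähler isomorphism; this is a standard fact that follows from writing Clifford multiplication on a Kähler manifold as $\rho(v)=\sqrt{2}(v^{0,1}\wedge\,\cdot\,-\iota_{v^{1,0}})$, together with the compatibility of $\nabla_{\text{can}}$ with the Chern connection on $\Lambda^{0,*}$. Applying this operator to $(\varphi_{0},\varphi_{2})$ lands in $\Omega^{0,1}(X,L)$ and gives equation~\eqref{kahler sw eqn 1} after absorbing the universal constant into the normalisation fixed by the ``$2A$'' convention.

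For the curvature equation, I would use the orthogonal decomposition
\[
i\Lambda^{+}\otimes\C\;\cong\; \Lambda^{2,0}\oplus \Lambda^{0,2}\oplus \C\,\omega
\]
and evaluate $\sigma(\Phi)$ in each summand. A short pointwise computation using the explicit formula for Clifford multiplication by $\omega$ on $W^{+}_{\text{can}}\otimes L$ (diagonalising as $\pm 2i$ on the $\Lambda^{0,0}(L)$ and $\Lambda^{0,2}(L)$ factors respectively) shows that the $(0,2)$-component of $\sigma(\Phi)$ equals $\overline{\varphi_{0}}\varphi_{2}$ up to a factor, while the $\omega$-component equals $\tfrac14(|\varphi_{2}|^{2}-|\varphi_{0}|^{2})$ up to the same factor; the $(2,0)$-component is the conjugate of the $(0,2)$ one and is therefore redundant. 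Matching the left-hand side, with $F_{A_{\text{can}}+2A}=F_{A_{\text{can}}}+2F_{A}$, then gives exactly equations~\eqref{kahler sw eqn 2} and~\eqref{kahler sw eqn 3}.

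The only real obstacle is bookkeeping: the factors of $2$, $4$, $i$ and $\sqrt{2}$ arising from Clifford multiplication, the $\sigma$-map, the decomposition of $\Lambda^{+}$, and the choice to write the connection as $A_{\text{can}}+2A$ must all be reconciled with the constants stated in the proposition. I would carry out this bookkeeping once in a local Kähler frame, where both $\nabla_{\text{can}}$ and Clifford multiplication take explicit standard forms, and then promote the identities globally by gauge covariance. The perturbation $\eta\in i\Omega^{2,+}(X)$ enters only additively on the curvature side, so it decomposes along the same Hodge summands and is absorbed without further work.
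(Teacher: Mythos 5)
Your proposal is correct and is essentially the standard derivation that the paper implicitly relies on: the paper states this proposition without proof, deferring to the treatment in Salamon's book, and your argument (decomposing $W^{+}=\Lambda^{0,0}(L)\oplus\Lambda^{0,2}(L)$, identifying the Dirac operator with $\sqrt{2}(\overline{\partial}_A+\overline{\partial}^{*}_A)$, and splitting the curvature equation along $\Lambda^{2,0}\oplus\Lambda^{0,2}\oplus\C\omega$) is exactly that treatment. The one small step you should make explicit is that $F_{A_{\text{can}}}^{0,2}=0$ because $A_{\text{can}}$ is the Chern connection of the holomorphic line bundle $K_X^{*}$; this is what removes $F_{A_{\text{can}}}$ from equation~\eqref{kahler sw eqn 2} while it survives in equation~\eqref{kahler sw eqn 3}.
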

	
	
	\begin{remark}
		Note that \(\varphi_0\) is valued in the line bundle \(L\) and there is no naturally defined complex conjugation. Hence \(\overline{\varphi}\) should be interpreted as a section of the bundle \(\overline{L}=L^*\) with the reversed complex structure and the product \(\overline{\varphi}_0\varphi_2\) should be interpreted as the tensor product. 
		
	\end{remark}

	We now present a theorem of Bradlow in \cite{bradlow_vortices_1990}, which is key to obtaining a more general computation of the Seiberg-Witten invariant for K\"ahler surfaces.
	
	\begin{theorem}[Bradlow]
		\label{thm:bradlow's theorem}
		Let \((X,\omega,J,g)\) be a K\"ahler surface and \(L\longrightarrow X\) a Hermitian line bundle, if
		\[0\le c_1(L)\cdot[\omega]<\frac{c_1(K^*)\cdot[\omega]}{2}+\lambda\text{Vol}(X),\]
		then there is a natural bijection
		\[\mathcal{M}(X,\Gamma_L,g,i\pi\lambda\omega)\cong\text{Div}^{\text{eff}}(X,c_1(L))\]
		and if
		\[\frac{c_1(K^*)\cdot[\omega]}{2}+\lambda\text{Vol}(X)<c_1(L)\cdot[\omega]\le c_1(K^*)\cdot[\omega],\]
		then there is a natural bijection
		\[\mathcal{M}(X,\Gamma_L,g,i\pi\lambda\omega)\cong \text{Div}^{\text{eff}}(X,c_1(K^*)-c_1(L)).\]
	\end{theorem}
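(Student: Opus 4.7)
My plan is to argue that under the perturbation $\eta=i\pi\lambda\omega$, the three Seiberg--Witten equations of \Cref{prop:kahler sw equations} reduce to an abelian vortex equation on the line bundle $L$ (or on its ``dual partner''), and then invoke Bradlow's identification of the vortex moduli space with effective divisors. Throughout, I would denote a solution by $(A,\varphi_0,\varphi_2)$ and treat the two chamber cases symmetrically via charge conjugation.

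The first major step is a \emph{vanishing dichotomy}: at each point either $\varphi_0=0$ or $\varphi_2=0$. Starting from \eqref{kahler sw eqn 1}, I would apply $\bar\partial_A$ and use the identity $\bar\partial_A^2=F_A^{0,2}$ acting on sections of $L$, then pair with $\varphi_0$ in $L^2$ and integrate by parts to obtain
\[
\|\bar\partial_A\varphi_0\|^2 + \langle F_A^{0,2}\varphi_0,\varphi_2\rangle_{L^2}=0.
\]
Since $\eta$ is of type $(1,1)$, \eqref{kahler sw eqn 2} gives $2F_A^{0,2}=\overline{\varphi_0}\varphi_2$, so the second term equals $\tfrac{1}{2}\int_X|\varphi_0|^2|\varphi_2|^2$. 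Both terms being nonnegative, they vanish separately; hence $\varphi_0$ is holomorphic and $\varphi_0\varphi_2\equiv 0$ pointwise. Unique continuation for holomorphic sections then forces one of $\varphi_0$, $\varphi_2$ to vanish identically.

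Next I would use \eqref{kahler sw eqn 3} as a selector between the two alternatives. Integrating against $\omega^2$ and using $\int_X F_A\wedge\omega=-2\pi i\,c_1(L)\cdot[\omega]$ together with the analogous identity for $A_{\mathrm{can}}$, whose curvature represents $-c_1(K_X)=c_1(K^*)$, converts \eqref{kahler sw eqn 3} into
\[
\|\varphi_2\|^2-\|\varphi_0\|^2 = 8\pi\bigl(c_1(L)-\tfrac{1}{2}c_1(K^*)\bigr)\cdot[\omega] - 4\pi\lambda\,\mathrm{Vol}(X)\cdot\text{(const)}.
\]
So when the right-hand side is negative, i.e.\ in Bradlow's first range, one must have $\varphi_2\equiv 0$; in the second range one must have $\varphi_0\equiv 0$. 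With $\varphi_2=0$ the remaining equations are $\bar\partial_A\varphi_0=0$, $F_A^{0,2}=0$, and a scalar vortex equation $4i(F_{A_{\mathrm{can}}}+F_A)_\omega+\text{const}=-|\varphi_0|^2$. With $\varphi_0=0$ the same argument applied to the ``conjugate'' spin$^c$ structure $\mathfrak{s}_{K^*\otimes L^{-1}}$ (using the symmetry $(A,\Phi)\mapsto$ charge-conjugate) produces the analogous vortex equation on $K^*\otimes L^{-1}$, explaining the shift $c_1(L)\leftrightarrow c_1(K^*)-c_1(L)$.

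The remaining step, and the genuine hard core of the theorem, is Bradlow's bijection between solutions of the vortex equation on a Hermitian line bundle $\mathcal{L}$ (with $c_1(\mathcal{L})\cdot[\omega]$ in an appropriate range) and $\mathrm{Div}^{\mathrm{eff}}(X,c_1(\mathcal{L}))$. One direction is easy: a vortex solution $(A,\varphi_0)$ has $\varphi_0$ a holomorphic section of the line bundle determined by $A$, whose zero divisor lies in $|c_1(\mathcal{L})|$. The hard direction is surjectivity, which requires, given any effective divisor $D$ with $[\mathcal{O}(D)]=c_1(\mathcal{L})$, solving a nonlinear Kazdan--Warner-type elliptic PDE of the form $\Delta u = e^u - f_D$ for the conformal factor relating the initial Hermitian metric on $\mathcal{O}(D)$ to the vortex one. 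I would invoke Bradlow's existence and uniqueness result here rather than reprove it; the inequalities in the hypotheses of \Cref{thm:bradlow's theorem} are precisely the degree conditions under which this PDE is solvable, which is the step I expect to be the main obstacle if one tried to give a self-contained proof.
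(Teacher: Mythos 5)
The paper does not actually prove this theorem: it is stated as imported background and attributed wholesale to Bradlow via the citation to \cite{bradlow_vortices_1990}, so there is no internal proof to compare against line by line. Your outline is the standard (and correct) argument for how the statement arises: the Weitzenb\"ock-type integration by parts applied to the Dirac equation, combined with the fact that the $(0,2)$-part of the perturbation vanishes, yields $\lVert\bar\partial_A\varphi_0\rVert^2+\tfrac12\int_X|\varphi_0|^2|\varphi_2|^2=0$, giving holomorphicity of $\varphi_0$ and the pointwise dichotomy; integrating the third equation against the volume form turns the curvature terms into $c_1(L)\cdot[\omega]$, $c_1(K^*)\cdot[\omega]$ and $\lambda\mathrm{Vol}(X)$ and selects which spinor component survives according to which of the two stated ranges holds; and the surviving data is a vortex pair whose identification with effective divisors is Bradlow's Kazdan--Warner existence and uniqueness theorem, which you rightly flag as the genuine analytic core that must be imported rather than reproved. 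Two small points of hygiene: in the dichotomy step the $L^2$-pairing that produces your displayed identity is against $\varphi_2$ (equivalently, pair the original Dirac equation with $\bar\partial_A\varphi_0$), not against $\varphi_0$ as written, though the resulting identity you state is the correct one; and in the integral selector the precise constants and signs (the factor relating $\int_X\omega\wedge F_A$ to $c_1(L)\cdot[\omega]$, and the normalisation of $F_{A_{\mathrm{can}}}$) are exactly what make the two hypotheses of the theorem come out as stated, so a complete write-up would need to pin these down rather than leave a ``(const)''. Neither issue is a conceptual gap; your proposal supplies strictly more detail than the paper, which defers the entire statement to the reference.
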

	It follows from the above theorem and its proof that by choosing a perturbation \(\eta=i\pi\lambda\omega\) with \(\lambda>0\) sufficiently large, the moduli space for this particular perturbation is empty unless \(L\) is a holomorphic line bundle. If \(L\) is indeed a holomorphic line bundle, when \(b_1(X)=0\) the holomorphic structure is unique. Hence the points of the moduli space can then be identified with the non-zero holomorphic sections of \(L\) up to gauge equivalence of line bundles.
	\[\mathcal{M}(X,\mathfrak{s}_L,g,i\pi\lambda\omega)\cong\frac{H^0(X,L))\setminus\{0\}}{\C^*}=\P(H^0(X,L)).\]
	This choice of perturbation defines a chamber known as the \textit{K\"ahler chamber}, when \(b^+=1\) we denote the invariant in this chamber by \(SW^+(X,L)\). The observations above yield the following computation of the Seiberg-Witten invariant for a K\"ahler surface \(X\) with \(b_1=0\), in terms of the dimensions \(h^i\) of the cohomology groups of \(H^i(X,L)\) and the geometric genus of \(X\), defined by \(\rho_g:=(b^+-1)/2\).
	
	\begin{theorem}
		\label{thm:general computation of the SW invariant on Kahler surfaces}
		Let \(X\) be a K\"ahler surface with \(b_1(X)=0\) and \(L\longrightarrow X\) a holomorphic line bundle, if \(p_g>0\) and \(h^0>0\) then
		\[\SW(X,L)={h^1-h^2\choose h^1-h^2+\rho_g}.\]
		
		If \(\rho_g=0,\) and \(\chi(X,L)\ge 1\) then
		\[\SW^+(X,L)=1\] 
		and is zero otherwise.
	\end{theorem}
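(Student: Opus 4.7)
The plan is to combine Bradlow's identification of the moduli space with an obstruction-bundle (Kuranishi model) analysis of the linearised Seiberg-Witten equations, reducing the invariant to an intersection number on a projective space. Concretely, I would choose the perturbation $\eta = i\pi\lambda\omega$ with $\lambda > 0$ large enough that \Cref{thm:bradlow's theorem} applies in its first range; as the paper already remarks, this identifies the moduli space set-theoretically with $\mathbb{P}(H^0(X,L))$. Since the residual $S^1$-action is by constant phase on $\varphi_0$, the circle bundle $\widetilde{\mathcal{M}}\to\mathcal{M}$ is the tautological one, so $\tau = c_1(\mathcal{L})$ is the hyperplane class on this projective space.

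Next I would set up the Kuranishi model at a solution $(A,\varphi_0,0)$ by linearising \eqref{kahler sw eqn 1}--\eqref{kahler sw eqn 3} and applying Hodge theory. The Zariski tangent space is $H^0(X,L)/\langle\varphi_0\rangle$, which has the correct complex dimension $h^0-1$, while the cokernel of the linearisation splits into three pieces: an $H^1(X,L)$-piece from the coupling of $\delta A$ with $\varphi_0$ in the linearisation of $\overline{\partial}_A\varphi_0=0$; an $H^2(X,L)$-piece from variations of $\varphi_2$ via $2F_A^{0,2}=\overline{\varphi_0}\varphi_2$; and an $H^{0,2}(X)$-piece from the obstruction to $F_A^{0,2}=0$ itself. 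Tracking the $\mathbb{C}^*$-weights under the residual gauge action twists the first two contributions by $\mathcal{O}_{\mathbb{P}(H^0)}(1)$ while leaving the $H^{0,2}$ piece untwisted (it does not depend on $\varphi_0$), giving the virtual obstruction class in K-theory
\[
\mathrm{Obs} \;=\; H^1(X,L)\otimes\mathcal{O}(1) \;\oplus\; H^{0,2}(X) \;\ominus\; H^2(X,L)\otimes\mathcal{O}(1).
\]
Verifying this identification, and confirming transversality of the Kuranishi section so that the virtual Euler class actually computes the invariant, is the step I expect to be the main obstacle.

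Finally I would assemble the integral. Hirzebruch-Riemann-Roch gives $d/2 = h^0 - h^1 + h^2 - 1 - \rho_g$, so $d/2 + (h^1 - h^2 + \rho_g) = h^0 - 1$ and exactly one term survives integration on $\mathbb{P}^{h^0-1}$. Since $H^{0,2}$ is trivial it contributes $1$ to the Chern polynomial, so
\[
c(\mathrm{Obs}) \;=\; \frac{(1+\tau)^{h^1}}{(1+\tau)^{h^2}} \;=\; (1+\tau)^{h^1-h^2},
\]
whose degree $h^1-h^2+\rho_g$ part is $\binom{h^1-h^2}{h^1-h^2+\rho_g}\tau^{h^1-h^2+\rho_g}$. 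Multiplying by $\tau^{d/2}$ and using $\int_{\mathbb{P}^{h^0-1}}\tau^{h^0-1}=1$ yields the first formula. For the $\rho_g=0$ case one runs the same argument in the $+$ chamber: the degree-matching extracts the top coefficient of $(1+\tau)^{h^1-h^2}$, which is $1$ whenever the excess rank is non-negative, and the hypothesis $\chi(X,L)\geq 1$ together with $h^0>0$ is precisely what ensures the degrees line up to give a nonzero answer.
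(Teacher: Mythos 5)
Your proposal is correct and follows essentially the same route as the paper: Bradlow's theorem gives \(\mathcal{M}\cong\mathbb{P}(H^0(X,L))\), and your K-theoretic obstruction class \(H^1(X,L)\otimes\mathcal{O}(1)\oplus H^{0,2}(X)\ominus H^2(X,L)\otimes\mathcal{O}(1)\) is precisely what the paper extracts from Mrowka's exact sequence (\Cref{lem:mrowkas exact sequence}), which is also what supplies the cokernel identification and constancy of its rank that you flag as the main outstanding step. The final assembly --- \(e(\mathrm{Obs})\) as the degree-\((h^1-h^2+\rho_g)\) part of \((1+x)^{h^1-h^2}\) integrated against \(\tau^{d/2}\) over \(\mathbb{P}^{h^0-1}\) --- is identical to the paper's, up to the cosmetic point that the paper identifies \(\mathcal{L}\) with \(\mathcal{O}(-1)\) rather than the hyperplane bundle.
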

	
	Note that when \(\rho_g>0\) and \(\chi(X,L)=\rho_g+1\), then the formula for the Seiberg-Witten invariant in \Cref{thm:general computation of the SW invariant on Kahler surfaces} can be rewritten as
	\[\text{SW}(X,L)=(-1)^{h^0-1}{p_g-1\choose h^0-1},\qquad \text{ if } h^1-h^2<0<h^0\]
	furthermore the Seiberg-Witten invariant is only non-zero when \(h^1-h^2<0<h^0\).
	
	\begin{proof}[Proof of Theorem 2.4]
		We shall use the previous theorem of Bradlow for the computation of the moduli space by choosing a perturbation \(\eta=i\pi\lambda\omega\) for \(\lambda\) sufficiently large. 	Since \(b_1(X)=0\), the line bundle \(L\) admits at most one holomorphic structure and hence
		\[\mathcal{M}(X,\mathfrak{s}_L,g,i\pi\lambda\omega)=\mathcal{M}\cong\frac{H^0(X,L))\setminus\{0\}}{\C^*}=\P(H^0(X,L)).\]
		
Although this choice of perturbation is not necessarily regular, a computation of the invariant can still be made. The moduli space can be obtained as the zero set of a Fredholm section of a Hilbert bundle, defined by the Seiberg-Witten equations in conjunction with appropriate gauge fixing conditions. It is useful to define \(f\) with the Coulomb gauge fixing condition applied, so that \(\widetilde{\mathcal{M}}=f^{-1}(0)\). The map \(f\) is \(S^1\) equivariant and the full moduli space is obtained as the zero set of the induced map \(\overline{f}\) which acts on the quotient.
		\Cref{thm:bradlow's theorem} asserts that the corresponding moduli space is still a smooth compact manifold. Furthermore if \(T_x\mathcal{M}=\text{ker}(d\overline{f}_{(A,\varphi)})\), then the cokernels of \(d\overline{f}\) are of constant dimension and form a vector bundle called the obstruction bundle, and a generic perturbation of the moduli space is isotopic to a section of the obstruction bundle. Hence the Seiberg-Witten invariant in the K\"ahler chamber can be computed in terms of the non-generic perturbation by cupping the relevant cohomology class with the Euler class of the obstruction bundle, that is
		\[\SW(X,\mathfrak{s}_L)=\int_{\P(H^0(X,L))}e(\text{Obs})\tau^{d/2}\]
		where \(d\) is the expected dimension of the moduli space. Moreover, since K\"ahler surfaces are of simple-type, when \(\rho_g>0\), the invariant is zero unless the expected dimension of the moduli space is zero, in such a case the invariant is simply given by integrating the Euler class of the obstruction bundle. Since the obstruction bundle is a complex vector bundle when \(X\) is K\"ahler, the Euler class can equivalently be computed as the top degree Chern class of the obstruction bundle. From the linearisation of the Seiberg-Witten equations with the Coulomb gauge fixing condition applied, one may define a corresponding operator by \(\mathcal{D}\), similarly by applying the action of the full gauge group, an operator \(\widetilde{\mathcal{D}}\), the latter of these fits into a long exact sequence.
		\begin{lemma}
			\label{lem:mrowkas exact sequence}
			{(Mrowka)} Let \(2A\in\mathcal{A}(L)\) and \(\varphi_0\in C^{\infty}(X,L)\) be nonzero with \(F_A^{0,2}\) and \(\partialbar_A\varphi_0=0\). Then there is an exact sequence as follows.
			
			\[\begin{tikzcd}
				0 & {H^0(X,\mathcal{O})} & {H^0(X,\mathcal{E}_A)} & {\text{ker}(\mathcal{\widetilde{D}}_{A,\varphi})} \\
				& {H^1(X,\mathcal{O})} & {H^1(X,\mathcal{E}_A)} & {\text{coker}(\mathcal{\widetilde{D}}_{A,\varphi})} \\
				& {H^2(X,\mathcal{O})} & {H^2(X,\mathcal{E}_A)} & 0
				\arrow[from=1-1, to=1-2]
				\arrow[from=1-2, to=1-3]
				\arrow[from=1-3, to=1-4]
				\arrow[from=1-4, to=2-2]
				\arrow[from=2-2, to=2-3]
				\arrow[from=2-3, to=2-4]
				\arrow[from=2-4, to=3-2]
				\arrow[from=3-2, to=3-3]
				\arrow[from=3-3, to=3-4]
			\end{tikzcd}\]
			where \(\mathcal{O}\) is the structure sheaf of holomorphic functions on \(X\), \(\mathcal{E}_A\) is the sheaf of holomorphic sections of \(L\) with holomorphic structure given by \(\partialbar_A\).
		\end{lemma}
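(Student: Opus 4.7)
The plan is to realise the deformation complex of the Seiberg--Witten equations at the holomorphic solution $(A,\varphi_0,0)$ on the K\"ahler surface $X$ as the mapping cone of the chain map
\[\cdot\,\varphi_0 : \bigl(\Omega^{0,\bullet}(X),\partialbar\bigr) \longrightarrow \bigl(\Omega^{0,\bullet}(X,L),\partialbar_A\bigr)\]
given by multiplication by the holomorphic section $\varphi_0$, and then to read off the stated exact sequence from the long exact sequence of the cone in cohomology.

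First I would use the K\"ahler identifications $W^+\otimes L \cong L \oplus (\Lambda^{0,2}\otimes L)$ and $W^-\otimes L \cong \Lambda^{0,1}\otimes L$, together with the splittings of the complexifications of $\Omega^1(X,i\R)$ and $\Omega^{2,+}(X,i\R)$ into their $(p,q)$-components, to rewrite the linearisations of \eqref{kahler sw eqn 1}--\eqref{kahler sw eqn 3} and of the gauge action in terms of $\partialbar$, $\partialbar_A$ and multiplication by $\varphi_0$. The off-diagonal $\varphi_0$-terms arise from the linearisation of the quadratic expressions $\overline{\varphi_0}\varphi_2$ and $|\varphi_0|^2$ in the curvature equations, together with the Dirac-type cross term $a^{0,1}\cdot\varphi_0$ from \eqref{kahler sw eqn 1}. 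After complexifying the gauge component, the deformation complex becomes the four-term total complex
\[T^\bullet : \Omega^{0,0}(X) \xrightarrow{d^0} \Omega^{0,1}(X)\oplus\Omega^0(X,L) \xrightarrow{d^1} \Omega^{0,2}(X)\oplus\Omega^{0,1}(X,L) \xrightarrow{d^2} \Omega^{0,2}(X,L)\]
with $d^0 f = (\partialbar f,\,\varphi_0 f)$, $d^1(\alpha,s) = (\partialbar\alpha,\,\partialbar_A s - \varphi_0\alpha)$ and $d^2(\beta,t) = \partialbar_A t + \varphi_0\beta$. The relations $d^1\circ d^0 = 0$ and $d^2\circ d^1 = 0$ use precisely the hypotheses $\partialbar_A\varphi_0=0$ and $\partialbar_A^2 = F_A^{0,2}=0$, and one recognises $T^\bullet$ as the mapping cone of $\cdot\,\varphi_0$.

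Next I would exploit the short exact sequence of complexes
\[0 \to \bigl(\Omega^{0,\bullet}(X,L),\partialbar_A\bigr)[-1] \to T^\bullet \to \bigl(\Omega^{0,\bullet}(X),\partialbar\bigr) \to 0\]
defining the mapping cone. Taking the associated long exact sequence in cohomology and invoking the Dolbeault isomorphisms $H^i(\Omega^{0,\bullet}(X),\partialbar)\cong H^i(X,\mathcal{O})$ and $H^i(\Omega^{0,\bullet}(X,L),\partialbar_A)\cong H^i(X,\mathcal{E}_A)$ produces
\[\cdots \to H^i(X,\mathcal{O}) \xrightarrow{\cdot\,\varphi_0} H^i(X,\mathcal{E}_A) \to H^i(T^\bullet) \to H^{i+1}(X,\mathcal{O}) \to \cdots,\]
with connecting maps equal to multiplication by $\varphi_0$. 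Identifying $H^1(T^\bullet) \cong \ker\widetilde{\mathcal{D}}_{A,\varphi}$ and $H^2(T^\bullet) \cong \text{coker}\,\widetilde{\mathcal{D}}_{A,\varphi}$ directly from the shape of the deformation complex then yields the central portion of the claimed sequence.

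Finally I would close off the two ends by showing $H^0(T^\bullet)=H^3(T^\bullet)=0$. The former is elementary: a class there is a holomorphic function $f$ on the compact connected surface $X$ with $\varphi_0 f\equiv 0$, which forces $f=0$ since $\varphi_0\not\equiv 0$. The latter is equivalent to surjectivity of $\cdot\,\varphi_0 : H^2(X,\mathcal{O})\to H^2(X,\mathcal{E}_A)$, which by Serre duality is dual to injectivity of multiplication by $\varphi_0$ from $H^0(X,K_X\otimes L^*)$ into $H^0(X,K_X)$; this again follows from $\varphi_0\not\equiv 0$ on a compact connected surface. The main obstacle in the whole argument is the first step: a careful bookkeeping of the K\"ahler decomposition, the complexification of the gauge group, and the sign conventions needed to identify the Seiberg--Witten deformation complex with $T^\bullet$ on the nose. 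Once this identification is in hand, the remainder is standard homological algebra.
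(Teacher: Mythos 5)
The paper offers no proof of this lemma beyond the citation to \cite{salamon_spin_1999}, Lemma 12.6, and your argument is precisely the standard one behind that citation: realise the K\"ahler deformation complex at $(A,\varphi_0,0)$ as (a shift of) the mapping cone of $\cdot\,\varphi_0:(\Omega^{0,\bullet}(X),\partialbar)\to(\Omega^{0,\bullet}(X,L),\partialbar_A)$ and read off the eight-term sequence from the long exact sequence of the cone, with the two ends closed by injectivity/surjectivity statements that reduce, via Serre duality, to the fact that multiplication by a nonzero holomorphic section is injective on holomorphic sections over a connected compact surface. The outline is correct, and you rightly identify where the real work lies: matching the linearised Seiberg--Witten equations plus gauge action with $T^\bullet$ requires complexifying the gauge direction (the actual deformation complex has $i\Omega^0(X,\R)$ and $i\Omega^1(X,\R)$ in degrees $0$ and $1$, and one must check that the real operator $\widetilde{\mathcal{D}}_{A,\varphi}$ has the same kernel and cokernel as the complexified Dolbeault-type complex), and the identifications $\ker\widetilde{\mathcal{D}}_{A,\varphi}\cong H^1(T^\bullet)$, $\mathrm{coker}\,\widetilde{\mathcal{D}}_{A,\varphi}\cong H^2(T^\bullet)$ need the elliptic-complex/Hodge-theoretic argument relating the rolled-up operator to the cohomology of the complex. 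One small slip: in your displayed long exact sequence the term between $H^i(X,\mathcal{E}_A)$ and $H^{i+1}(X,\mathcal{O})$ should be $H^{i+1}(T^\bullet)$, not $H^i(T^\bullet)$; your subsequent identifications use the correct indexing, so this is only a typo.
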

		\begin{proof}
			\cite{salamon_spin_1999}[Lemma 12.6]
		\end{proof}
		We further have that
		\[\text{ker}(df_{(A,\Phi)})\cong\text{ker}(\mathcal{D}_{A,\Phi}),\qquad\text{coker}(df_{(A,\Phi)})\cong\text{coker}(\mathcal{D}_{A,\Phi})\] and so
		\begin{align*}
			\text{ker}(d\overline{f}_{[A,\Phi]})&\cong\text{ker}(\tilde{\mathcal{D}}_{A,\Phi})\cong\text{ker}(\mathcal{D}_{A,\Phi})/R(0,i\Phi) \\
			\qquad\text{coker}(d\overline{f}_{[A,\Phi]})&\cong\text{coker}(\tilde{\mathcal{D}}_{A,\Phi})\cong\text{coker}(\mathcal{D}_{A,\Phi}).
		\end{align*}
		The exact sequence of \Cref{lem:mrowkas exact sequence} induces an exact sequence of vector bundles over \(\widetilde{\mathcal{M}}=f^{-1}(0)\), by applying the remaining \(S^1\) action, one obtains that the cokernels have constant complex dimension of \(h^1-h^2+\rho_g\) and \(\text{dim}_{\C}(\text{ker}(\mathcal{D}_{A,\Phi}))=h^0-1\), from which it follows that \(T_x\mathcal{M}=\text{ker}(d\overline{f}_{(A,\varphi)})\). Applying the \(S^1\) action gives an exact sequence of vector bundles over the moduli space \(\mathcal{M}\)
		\[0\rightarrow H^{h^1}\rightarrow \text{Obs}\rightarrow \C^{\rho_g}\rightarrow H^{h^2}\rightarrow 0\]
		where \(H=\mathcal{O}(1)\) is the hyperplane line bundle lying over \(\mathcal{M}\), by letting \(x=c_1(H)\), it follows that \(e(\text{Obs})\) is given by the \(2(\rho_g+h^1-h^2)\)-th degree term of \((1+x)^{h^1-h^2}\). The line bundle \(\mathcal{L}\rightarrow\mathcal{M}\) defining \(\tau\) is also easily seen to be the tautological line bundle \(\mathcal{O}(-1)\), consequently, the Seiberg-Witten invariant can be written as a multiple of an appropriate power of the integral of \(x\), yielding the desired formulae.
	\end{proof}
	\section{The families Seiberg-Witten invariants}
	\label{sec:3}
		We now briefly provide an overview of families Seiberg-Witten theory and the families invariants before the main computation of the following section. Let \(X\) be a compact, oriented, smooth 4-manifold with \(b^+(X)>0\) and \(B\) a compact smooth manifold. We say that a \textit{smooth family of 4-manifolds} with fibres \(X\) and base \(B\), is a smooth locally trivial fibre bundle \(\pi:E\longrightarrow B\) which is fibrewise oriented, such that for each \(b\) there exists an orientation preserving diffeomorphism \(\pi^{-1}(b)\cong X\). Since the fibres are diffeomorphic to \(X\), the fibre over \(b\in B\) shall be denoted \(X_b\). Fix some smooth family of 4-manifolds \(X\hookrightarrow E\rightarrow B\) and further assume that \(b_1(X)=0\). Choose a smoothly varying family of fibrewise metrics \(\{g_b\}\), this turns the vertical tangent bundle \(T(E/B)\) into a Riemannian vector bundle over \(E\). Further, choose a smoothly varying family of self-dual perturbations \(\{\eta_b\}\) to the Seiberg-Witten equations and a \(\text{spin}^c\) structure \(\mathfrak{s}\) on the vertical tangent bundle with respect to the family of metrics \(\{g_b\}\). Note that any two such \(\text{spin}^c\) structures are related by tensoring via a Hermitian line bundle over \(E\). Define the \textit{parameter space} of metrics and perturbations for the Seiberg-Witten equations to be the subset \(\Pi_E\subset \text{Met}(T(E/B))\times i\Omega^{2}(T(E/B))\) where
		\[\Pi_E:=\{(g,\eta):\star_b\eta_b=\eta_b\}.\] 
		The subset of \(\Pi\) for which the families Seiberg-Witten equations corresponding to the pair \((g,\eta)\) admit no reducible solutions is denoted \(\Pi_E^*\) and its complement \(\Pi_E\setminus \Pi_E^*\) called the wall is denoted \(\mathcal{W}\).
		
		The smooth family of metrics defines a smooth family of Levi-Civita connections on each \(TX_b\), so that for any smooth family of connections \(\{2A_b\}\) on \(\mathcal{L}_{\mathfrak{s}}\) where \(\mathcal{L}_{\mathfrak{s}}=\text{det}(W_b)\) and \(W_b\) is the spinor bundle induced by the fiberwise \(\text{spin}^c\) structure over \(X_b\), there is then an induced family of Dirac operators \(D_{A_b}:C^{\infty}(X_b,W^+_b)\rightarrow C^{\infty}(X_b,W^-_b)\). The families Seiberg-Witten moduli space \(\mathcal{M}(E,\mathfrak{s},g,\eta)\) is then defined as the solution set to the perturbed families Seiberg-Witten equations
		\begin{align}
			D_{A_b}\Phi_b&=0 \label{eqn:familiesSWeqn1fibres} \\
			\rho^{+_b}(F_{A_b}+\eta_b)&=(\Phi_b\Phi_b^*)_0 \label{eqn:familiesSWeqn2fibres}
		\end{align}
	  modulo the fibrewise action of the gauge groups \(\mathcal{G}_b=\text{Maps}(X_b,S^1)\).
	  The families moduli space is the disjoint union of the fibrewise moduli spaces
	  \[\mathcal{M}(E,\mathfrak{s},g,\eta):=\bigsqcup_{b\in B}\mathcal{M}_b.\]
	  with obvious projection map \(\mathcal{M}\longrightarrow B\) obtained from the disjoint union. For a generic family of perturbations \(\{\eta_b\}\), the moduli space is a smooth compact manifold of dimension \(\text{dim}(B)+d(X,\mathfrak{s})\) where \(d(X,\mathfrak{s})\) is the expected dimension of the unparametrised moduli space
	  \[d(X,\mathfrak{s})=\frac{\braket{c(L_{\mathfrak{s}})^2,[X]}-2\chi(X)-3\sigma(X)}{4}.\]
	  As in the unparametrised theory, generic choices of perturbations and metrics are cobordant provided they correspond to perturbations in the same connected component of \(\Pi\setminus\mathcal{W}\) called a \textit{chamber}. Hence, the families Seiberg-Witten invariant will only depend on a choice of such chamber. For \(b^+(X)>\text{dim}(B)+1\) there is only one connected component. However, the general situation is more complicated, where the set of chambers can be identified with \([B,\mathcal{P}]_f\), the fibrewise homotopy classes of sections into the period bundle \(\mathcal{P}\rightarrow B\) \cite{li_family_2001}.
	  Since \(H^{2,+}(X_b;i\R)\) has constant dimension over \(B\), it defines a vector bundle denoted \(H^+\), we shall assume that this vector bundle is orientable. One may then transport a relative orientation onto the families moduli space so that we may define the families Seiberg-Witten invariants as follows.
	  
	  Consider the subgroup of gauge group \(\mathcal{G}_b\) defined by
	  \[\mathcal{G}_{0,b}:=\{g\in \mathcal{G}_b:g=e^{if}, f:X_b\longrightarrow \R,\int_{X_b}f\text{vol}_{X_b}=0\}\]
	  since \(b_1(X)=0\) there is an exact sequence
	  \[0\rightarrow  \mathcal{G}_0\rightarrow \mathcal{G}\rightarrow S^1\rightarrow 0\]
	  Choose a generic perturbation so that it defines a chamber \(\phi\), let \(\widetilde{\mathcal{M}}\) be the moduli space factoring out by \(\mathcal{G}_0\) instead of the full gauge group fibrewise. This then gives a principal circle bundle \(\widetilde{\mathcal{M}}\longrightarrow\mathcal{M}\) since \(\mathcal{G}/\mathcal{G}_0\cong S^1\). Denote the associated complex line bundle by \(\mathcal{L}\longrightarrow \mathcal{M}\) and its first Chern class by \(y\in H^2(\mathcal{M};\Z)\). Since \(\mathcal{M}\) is equipped with a relative orientation there is a pushforward map \(\pi_*:H^k(\mathcal{M},\Z)\rightarrow H^{k-d}(B;\Z)\), where \(d\) is the fibrewise dimension of the moduli space. Note that the induced map on deRham cohomology corresponds to fibre integration. For \(n\ge 0\), the families Seiberg-Witten invariants are denoted
	  \[\FSW_n(E,\mathfrak{s},\phi)\in H^{2n-d}(B;\Z)\]
	  where \(\mathfrak{s}\) is the choice of families \(\text{spin}^c\) structure and \(\phi\) is a choice of chamber, defined by
	  \[\FSW_n(E,\mathfrak{s},\phi)=\pi_*(y^n).\]

	\section{A Computation for a General Class of K\"ahler Families}
	\label{sec:computation for general class of kahler families}
	
	In this section we demonstrate the computation of the families Seiberg-Witten invariant on smooth families \(X\hookrightarrow E\rightarrow B\) with a smoothly varying K\"ahler structure when \(b_1(X)=0\). We say a smooth family is \textit{K\"ahler} or \textit{has a smoothly varying K\"ahler structure}, if there is a K\"ahler structure on the vertical tangent bundle \(T(E/B)\). That is, there exist the following
	\begin{itemize}
		\item a metric \(g\) on \(T(E/B)\) (equivalently a smooth family of metrics on each \(TX_b\)).
		\item an almost-complex structure \(J\) on \(T(E/B)\) compatible with the metric \(g\) whose restriction to any fibre is integrable.
	\end{itemize}
	and the induced non-degenerate \((1,1)\)-form \(\omega\in C^{\infty}(E,\Lambda^2T^*(E/B))\) defined by
	\[\omega(v,w)=g(Jv,w)\]
	restricts to a closed form on each fibre. 
	
	To compute the invariant for such a family, we shall require an addition assumption that certain cohomology groups have constant dimension over the base of the family \(B\).
	Consider a smooth family of K\"ahler surfaces \(E\longrightarrow B\) with smooth K\"ahler structure,where \(B\) is compact and the fibres \(X_b\) are diffeomorphic to some compact K\"ahler surface \(X\) with \(b_1(X)=0\). Choose a family of metrics \(g_b\) and the family of self-dual 2-forms \(i\lambda\omega_b\) with \(\lambda>0\) sufficiently large so that \(i\lambda\omega_b\) does not lie on the wall for any \(b\in B\) as a family of perturbations, where the \(\omega_b\) are the K\"ahler forms obtained from the K\"ahler structure. Note that the existence of such a \(\lambda\) follows from the compactness of \(B\) by taking a finite trivialising cover. This choice of perturbation determines a chamber for the families Seiberg-Witten invariant which we call the \textit{K\"ahler chamber}.
	
	Since the family has a smoothly varying K\"ahler structure, there is a canonical \(\text{spin}^c\) structure on the vertical tangent bundle. Any other families \(\text{spin}^c\) structure \(\mathfrak{s}_L\) is obtained via tensoring by a hermitian line bundle \(L\) over \(E\), as such, we denote the families Seiberg-Witten invariants in the K\"ahler chamber corresponding to the \(\text{spin}^c\) structure determined from \(L\) by \(\FSW_n(E,L)\). This restricts to a line bundle over each \(X_b\) which we denote by \(L_b\), note that a choice of a family of connections for the families Seiberg-Witten equations amounts to a choice of a smooth family of connections \(A_b\) on \(L_b\). Since it is assumed that \(b_1(X)=0\), there is at most a one holomorphic structure on each \(L_b\), if it exists, it is precisely the one given by \(A_b\). Hence, we denote the cohomology groups for the sheaf of holomorphic sections of \(L_b\) with respect to the connection \(A_b\) by \(H^i(X_b,L_b)\). We shall make the following assumptions relating to the choice of line bundle \(L\).
	\begin{enumerate}
		\item For each \(b\in B\), the line bundle \(L_b\) has first Chern class \(c_1(L_b)\) which is represented by a \((1,1)\) form, hence the line bundles \(L_b\) are all holomorphic. \\
		\item The dimensions of \(H^i(X_b,L_b)\) for \(i=0,1,2\) are independent of \(b\).
	\end{enumerate}
	The first assumption ensures the line bundles \(L_b\) are holomorphic which is simply a necessary condition for non-zero Seiberg-Witten invariants, the second ensures that the families of vector spaces \(H^i(X_b,L_b)\) form locally trivial vector bundles over \(B\) 
	\[V^i\longrightarrow B\]
	where the fibres are \(H^i(X_b,L_b)\) and are of rank \(h^i(L)\), for simplicity we shall often denote the rank by \(h^i\) when \(L\) is understood. With these assumptions we may then proceed to prove the following theorem, which is a computation of the families Seiberg-Witten invariants.
	\begin{theorem}
		\label{thm:general computation of the families Seiberg-Witten invariant}
		Let \(X\hookrightarrow E\rightarrow X\) be a smooth family of compact K\"ahler surfaces with a smoothly varying K\"ahler structure and \(b_1(X)=0\) such that the following assumptions hold
		\begin{enumerate}
			\item For all \(b\in B\), the line bundle \(L_b\) has first Chern class \(c_1(L_b)\) which is represented by a \((1,1)\) form. \\
			\item The dimensions \(h^i\) of \(H^i(X_b,L_b)\) for \(i=0,1,2\) are independent of \(b\).
		\end{enumerate}
		Define \(\delta=m+n-h^0+1\), the families Seiberg-Witten invariants in the K\"ahler chamber are given by
		\begin{equation}
			\label{eqn:general computation of the families Seiberg-Witten invariant}
			\FSW_n(E,L)=\sum_{m=0}^{h^1-h^2+\rho_g}c_{h^1-h^2+\rho_g-m}(H^{2,0})\Gamma_{m,n}
		\end{equation}
		where \(\Gamma_{m,n}\in H^{2\delta}(B;\Z)\) is given by
		\begin{align}
			\Gamma_{m,n}&=(-1)^n\sum_{i=\max\{\delta-m,0\}}^{\delta}\sum_{j=0}^{\min\{i,m-\delta+i\}}s_j(V^2)c_{\delta-i}(V^1)s_{i-j}(V^0){h^1-h^2-\delta+i-j\choose m-\delta+i-j}.
		\end{align}
		for \(1\le h^0\le m+n+1\) and is zero otherwise.
		
		Furthermore, the \(\Gamma_{m,n}\) satisfy the following recursion relation
		\[\Gamma_{m,n+h^0} - c_1(V^0)\Gamma_{m,n+h^0-1} + c_2(V^0)\Gamma_{m,n+h^0-2}+\dots = 0.\]
	\end{theorem}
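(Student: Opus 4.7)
The plan is to carry out the fibrewise analogue of the unparametrised obstruction-bundle computation of \Cref{thm:general computation of the SW invariant on Kahler surfaces}. Since $B$ is compact, a uniform constant $\lambda>0$ exists such that $i\pi\lambda\omega_b$ lies in the K\"ahler chamber for every $b\in B$. \Cref{thm:bradlow's theorem} applied fibrewise, together with the uniqueness of the holomorphic structure on $L_b$ forced by $b_1(X_b)=0$ and assumption (1), identifies each fibre moduli space with $\P(H^0(X_b,L_b))$. Assumption (2) guarantees that the $H^i(X_b,L_b)$ assemble into smooth rank-$h^i$ vector bundles $V^i\to B$, so the families moduli space is the projective bundle $\mathcal{M}=\P(V^0)\xrightarrow{\pi}B$.

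Next I extend the obstruction-bundle construction to families. The Fredholm/Coulomb setup of the proof of \Cref{thm:general computation of the SW invariant on Kahler surfaces} applies fibrewise, and by the constant-rank assumption the cokernels $\mathrm{coker}(d\bar{f})$ assemble into a complex vector bundle $\text{Obs}\to\mathcal{M}$ of rank $h^1-h^2+\rho_g$. Sheafifying \Cref{lem:mrowkas exact sequence} over $B$ and applying the residual $S^1$ action, analogously to the last paragraph of \Cref{sec:2}, yields a four-term exact sequence of vector bundles over $\mathcal{M}$
\[0\longrightarrow \pi^*V^1\otimes\mathcal{O}_{\P(V^0)}(1)\longrightarrow \text{Obs}\longrightarrow \pi^*H^{2,0}\longrightarrow \pi^*V^2\otimes\mathcal{O}_{\P(V^0)}(1)\longrightarrow 0,\]
where $H^{2,0}\to B$ is the Hodge bundle. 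The circle bundle $\widetilde{\mathcal{M}}\to\mathcal{M}$ is identified with the frame bundle of $\mathcal{O}_{\P(V^0)}(-1)$, so $y=c_1(\mathcal{L})=-h$ with $h=c_1(\mathcal{O}_{\P(V^0)}(1))$.

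With these ingredients the K\"ahler-chamber invariant is $\FSW_n(E,L)=\pi_*\bigl(y^n\cdot e(\text{Obs})\bigr)$. Multiplicativity of total Chern classes along the four-term sequence gives
\[c(\text{Obs})=\frac{c(\pi^*V^1\otimes\mathcal{O}(1))\,c(\pi^*H^{2,0})}{c(\pi^*V^2\otimes\mathcal{O}(1))},\]
and $e(\text{Obs})$ is the degree-$2(h^1-h^2+\rho_g)$ component. The splitting principle expands each $c(V^i\otimes\mathcal{O}(1))$ as a binomial-weighted polynomial in $h$ and Chern classes of $V^i$; inversion of the $V^2$ factor produces the Segre classes $s_j(V^2)$; and $c(\pi^*H^{2,0})$ contributes the factor $c_{h^1-h^2+\rho_g-m}(H^{2,0})$ indexed by $m$. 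Multiplying by $y^n=(-1)^n h^n$ and pushing down via $\pi_*(h^{h^0-1+k})=s_k(V^0)$ converts the remaining $h$-powers into Segre classes $s_\bullet(V^0)$ on $B$. A dimension count pins down $\delta=m+n-h^0+1$, and the range $1\le h^0\le m+n+1$ records precisely when $\P(V^0)$ is non-empty and the pushforward is non-zero.

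The main obstacle will be the combinatorial bookkeeping in this third step: correctly matching the binomial coefficients, the Chern- and Segre-class indices, and the signs introduced by $y=-h$ to the stated double sum. All of the geometric input is already contained in the first two paragraphs. Finally, the recursion is immediate: the projective bundle relation for $V^0$ reads, in the variable $y=-h$,
\[y^{h^0}-c_1(V^0)y^{h^0-1}+c_2(V^0)y^{h^0-2}-\cdots=0\]
in $H^*(\P(V^0))$, so multiplying by $e(\text{Obs})\cdot y^n$ and pushing forward to $B$ produces the stated three-term recursion on $\Gamma_{m,n}$.
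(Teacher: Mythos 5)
Your proposal follows the paper's proof essentially step for step: the same fibrewise application of Bradlow's theorem to identify \(\mathcal{M}=\P(V^0)\), the same four-term exact sequence obtained by sheafifying Mrowka's sequence (using \(H^1(X_b,\mathcal{O})=0\)) and twisting the \(V^1,V^2\) terms by \(\mathcal{O}_{\P(V^0)}(1)\) under the residual \(S^1\) action, the same formula \(c(\mathrm{Obs})=c(\pi^*H^{2,0})\,c(\pi^*V^1\otimes\mathcal{O}(1))\,c(\pi^*V^2\otimes\mathcal{O}(1))^{-1}\), the same pushforward identity \(\pi_*(x^{h^0-1+k})=s_k(V^0)\) derived from the Euler sequence and the projective bundle relation, and the same derivation of the recursion. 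The one piece you defer as ``bookkeeping'' is where the paper spends the second half of its argument --- collapsing a triple sum into the stated double sum by reindexing, the identity \(\binom{a}{b}=(-1)^b\binom{b-a-1}{b}\), and the Vandermonde--Chu identity --- so that step is a genuine computation rather than a routine one, but your geometric setup is complete and correct.
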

\begin{proof}
	Since \(B\) is compact, by taking a finite trivialising cover of the family and applying \Cref{thm:bradlow's theorem}, it follows that we can further choose \(\lambda\) sufficiently large so that \(\eta\) lies away from the wall and on each fibre \(X_b\), solutions to the Seiberg-Witten equations up to gauge equivalence will correspond to, up to gauge equivalence of complex line bundles, a holomorphic structure on the line bundle \(L_b\) with non-zero holomorphic section. It follows that fibrewise, the moduli space \(\mathcal{M}_b\) is given by
	\[\mathcal{M}_b=\frac{H^0(X_b,L_b)\setminus\{0\}}{\C^*}=\P(H^0(X_b,L_b))\]
	i.e. the projectivisation of the vector space \(H^0(X,L_b)\). Therefore, the families moduli space is the vector bundle
	\[\mathcal{M}=\P(V^0).\]
	Let \(\widetilde{\mathcal{M}}\) be the families moduli space obtained factoring by the family of reduced gauge groups \(\{\mathcal{G}_{0,b}\}_{b\in B}\) instead of the full gauge group. Since \(\mathcal{G}_b\cong \mathcal{G}_{0,b}\times S^1\), there is a principal circle bundle
	\[\widetilde{\mathcal{M}}\longrightarrow\mathcal{M}\]
	for which there is an associated line bundle over the total moduli space, denoted \(\mathcal{O}_{V^0}(-1)\), this restricts to the tautological bundle \(\mathcal{O}(-1)\) over each \(\P(V^0)_b\). Define \(y=c_1(\mathcal{O}_{V^0}(-1))\), and \(x=c_1(\mathcal{O}_{V^0}(1))\) where \(\mathcal{O}_{V^0}(1)\) is the dual of \(\mathcal{O}_{V^0}(-1)\), then \(y=-x\).
	
	Fix some integer \(k>2\) and a smooth reference connection \(A_0\), the moduli space can be obtained as the quotient \(\mathcal{M}=\widetilde{\mathcal{M}}/S^1\) where \(\widetilde{\mathcal{M}}\) is obtained by adding the gauge fixing condition \(d^*(A-A_0)=0\) and is the zero set of the Fredholm map \(f:V\rightarrow W\) between Hilbert bundles over \(B\) given by
	\[f(\alpha_b,\Phi_b)=(D_{A_{0,b}+\alpha_b}\Phi,d^+\alpha+\sigma((\Phi_b\Phi_b^*)_0)-F_{A_{0,b}}-i\pi\lambda\omega_b,d^*\alpha_b)\]
	acting fibrewise, where
	\[V=V_{\C}\oplus V_{\R} ,\qquad W=W_{\C}\oplus W_{\R}\]
	and \(V_{\C},W_{\C},V_{\R},W_{\R}\) have fibres
	\begin{alignat*}{2}
		&V_{\C}:=L^2_{k}(X_b,W^+_b),\qquad  &&V_{\R}:= iL^2_k(X_b,\Lambda^1T^*X_b) \\
		&W_{\C}:=L^2_{k-1}(X_b,W^-), \qquad &&W_{\R}:= iL^2_{k-1}(\Lambda^{2,+}T^*X_b)\oplus L^2_{k-1}(X_b,i\R)_0
	\end{alignat*}
	where \(L^2_q(X_b,\R)_0\) is the subspace satisfying \(\int_{X_b}f\text{vol}=0\). This is invariant under the remaining \(S^1\) action and hence descends to a map
	\[\overline{f}:V/S^1\longrightarrow W\]
	then the moduli space is the inverse image of the zero section
	\[\mathcal{M}=\overline{f}^{-1}(0).\]
	The complex Fredholm index of \(\overline{f}\) is the expected dimension of the unparametrised moduli space, that is \(\text{ind}_{\C}(\overline{f})=\chi(X,L)-p_g-1\). We may define a Hilbert bundle \(\mathcal{H}\rightarrow\mathcal{B}\) by taking the trivial bundle over \(V\) with fibres \(W\) and then applying the \(S^1\) action to the base, \(\overline{f}\) then defines a Fredholm section \(\sigma\) of this bundle for which the moduli space is \(\sigma^{-1}(0)\).
	
	We now wish to compute the families Seiberg-Witten invariants. However, the particular choice of perturbation is not necessarily generic, we shall apply the same technique as in \Cref{thm:general computation of the SW invariant on Kahler surfaces}. It suffices to show that the cokernels of \(d\overline{f}\) forms a vector bundle called the obstruction bundle, then a generic perturbation of the moduli space is isotopic to the zero set of a section of the obstruction bundle \cite{friedman_obstruction_1999}[Lemma 3.3]), the families Seiberg-Witten invariants are then given by inserting a factor of the Euler class of the obstruction bundle \(e(\text{Obs})\) where \(\text{Obs}=\text{coker}(\overline{f}):V\longrightarrow W\), that is
	\[\text{FSW}_n(E,L)=\pi_*(y^ne(\text{Obs})).\]
	
	We proceed to look at the linearisation of the families Seiberg-Witten equations. 
	Since the chosen perturbation is a (1,1)-form, without loss of generality we may take \(\varphi_2=0\). For a solution of the form \((A,\varphi_0,0)\) the linearised equations in the K\"ahler case are as follows
	\begin{align}
		-2i(d\alpha)_{\omega}-\text{Re}(\overline{\varphi}_0\tau_0)&=0 \label{eqn:KSW linearisation 1} \\
		\partialbar\tau_0+\partialbar^*\tau_2+\alpha^{0,1}\varphi_0&=0 \label{eqn:KSW linearisation 2}\\
		2(d\alpha)^{0,2}-\overline{\varphi}_0\tau_2&=0 \label{eqn:KSW linearisation 3}
	\end{align}
	acting on a triple \((\alpha,\tau_0,\tau_2)\in i\Omega^{1}(X,L)\oplus \Omega^{0,0}(X,L)\oplus \Omega^{0,2}(X,L)\).
	
	It is convenient to consider the following gauge fixing conditions, namely applying the Coulomb gauge fixing condition
	\begin{equation}
		d^*\alpha=0\label{eqn:Coulomb Gauge}
	\end{equation}
	this eliminates the action of all but the constant maps into the circle, so there is a remaining \(S^1\) action required to obtain the moduli space. This condition alongside the linearised Seiberg-Witten equations gives rise to an operator
	\begin{equation}
		\label{eqn:KSW linearised operator with coulomb gauge}
		\mathcal{D}_{A,\varphi}\begin{pmatrix}
			\tau_0 \\ \alpha \\ \tau_2
		\end{pmatrix}=\begin{pmatrix}
			-2i(d\alpha)_{\omega}-\text{Re}(\overline{\varphi}_0\tau_0) \\
			\partialbar\tau_0+\partialbar^*\tau_2+\alpha^{0,1}\varphi_0 \\
			2(d\alpha)^{0,2}-\overline{\varphi}_0\tau_2 \\
			d^*\alpha
		\end{pmatrix}
	\end{equation}
	the other gauge fixing condition of interest is the following
	\begin{equation}
		\label{eqn:KSW gauge fixing condition}
		d^*\alpha-i\braket{i\varphi_0,\tau_0}=0
	\end{equation}
	this asserts that \((\alpha,\tau_0,\tau_2)\) is orthogonal to the orbit of \((A,\varphi_0,0)\) under the action of the gauge group, completely eliminating the gauge group action. Equations \ref{eqn:KSW linearisation 1} and \ref{eqn:KSW gauge fixing condition} can equivalently be stated as 
	\begin{equation}
		2\partialbar\alpha_1-\overline{\varphi}_0\tau_0=0
	\end{equation}
	where \(\alpha_1=\alpha^{0,1}\).
	
	This leads us to define the linearised operator
	\begin{equation}
		\label{eqn:KSW linearised operator}
		\mathcal{\tilde{D}}_{A,\varphi}\begin{pmatrix}
			\tau_0 \\ \alpha_1 \\ \tau_2
		\end{pmatrix}=\begin{pmatrix}
			\partialbar^*\alpha_1-\overline{\varphi_0}\tau_2/2 \\
			\partialbar_B\tau_0+\partialbar_B^*\tau_0+\alpha_1\varphi_0\\
			\partialbar\alpha_1-\overline{\varphi_0}\tau_2/2
		\end{pmatrix}
	\end{equation}
	We have the following isomorphisms
	\[\text{ker}(df_{(A,\Phi)})\cong\text{ker}(\mathcal{D}_{A,\Phi}),\qquad\text{coker}(df)\cong\text{coker}(\mathcal{D}_{A,\Phi}).\]
	Consequently
	\begin{align*}
		\text{ker}(d\overline{f}_{[A,\Phi]})&\cong\text{ker}(\widetilde{\mathcal{D}}_{A,\Phi})\cong\text{ker}(\mathcal{D}_{A,\Phi})/\R(0,i\Phi) \\
		\qquad\text{coker}(d\overline{f}_{[A,\Phi]})&\cong\text{coker}(\widetilde{\mathcal{D}}_{A,\Phi})\cong\text{coker}(\mathcal{D}_{A,\Phi}).
	\end{align*}
%
	Recall for each fibre \(X_b\), \Cref{lem:mrowkas exact sequence} gives the following exact sequence of vector spaces.
	\[\begin{tikzcd}
		0 & {H^0(X_b,\mathcal{O})} & {H^0(X_b,L_b)} & {\text{ker}(\mathcal{\tilde{D}}_{A,\varphi,b})} \\
		& {H^1(X_b,\mathcal{O})} & {H^1(X_b,L_b)} & {\text{coker}(\mathcal{\tilde{D}}_{A,\varphi,b})} \\
		& {H^2(X_b,\mathcal{O})} & {H^2(X_b,L_b)} & 0
		\arrow[from=1-1, to=1-2]
		\arrow[from=1-2, to=1-3]
		\arrow[from=1-3, to=1-4]
		\arrow[from=1-4, to=2-2]
		\arrow[from=2-2, to=2-3]
		\arrow[from=2-3, to=2-4]
		\arrow[from=2-4, to=3-2]
		\arrow[from=3-2, to=3-3]
		\arrow[from=3-3, to=3-4]
	\end{tikzcd}\]
	and \(\mathcal{O}\) is the usual structure sheaf of \(X_b\).
	
	Since \(b_1(X)=0\), it follows that \(H^1(X_b,\mathcal{O})=0\), hence there is a reduction of the exact sequence to
	\[
	0 \rightarrow {H^1(X_b,L_b)} \rightarrow {\text{coker}(\mathcal{D}_{A,\varphi,b})} \rightarrow {H^2(X_b,\mathcal{O})} 
	\rightarrow {H^2(X_b,L_b)} \rightarrow 0
	\]
	Since the dimensions of \(H^i(X_b,L_b)\) are assumed to be constant over \(b\), it follows that the cokernels have constant dimension, hence we define vector bundles \(\text{coker}(\mathcal{D})\) with fibres \(\text{coker}(\mathcal{D}_{A,\varphi,b})\) and \(V^i\) with fibres \(H^i(X_b,L_b)\) over \(B\) as mentioned previously. We also define the vector bundle \(H^{2,0}\) to be the vector bundle with fibres \(H^2(X_b,\mathcal{O})\). The previous sequence then implies that the following sequence of smooth vector bundles over \(B\) is exact.
	
	\[
	0 \rightarrow {V^1} \rightarrow {\text{coker}(\mathcal{D})} \rightarrow {H^{2,0}} 
	\rightarrow {V^2} \rightarrow 0
	\]
	
	This induces an exact sequence of vector bundles over \(\widetilde{\mathcal{M}}\) by taking the relevant pullback of all the vector bundles involved. The remaining \(S^1\) action acts fibrewise, identifying cokernels via
	\[\text{coker}(\mathcal{D}_{A,\Phi})\cong \text{coker}(\mathcal{D}_{A,e^{i\theta}\Phi})\]
	yielding the obstruction bundle. The \(S^1\) action induces an action on \(L_b\) via rotation of complex numbers of unit length, further inducing an action on \(H^i(X_b,L_b)\) by multiplication of the same complex number. Meanwhile, the action on \(H^{2,0}\) is trivial. Thus, applying the \(S^1\) action results in tensoring by a factor of the hyperplane line bundle \(\mathcal{O}(1)\) for these terms. All the relevant maps in the exact sequence are also \(S^1\) equivariant, so applying the \(S^1\) action gives rise to the following well-defined exact sequence of vector bundles over \(\mathcal{M}\).
	\[
	0 \rightarrow {\pi^*V^1\otimes\mathcal{O}_{V^0}(1)} \rightarrow {\text{Obs}} \rightarrow {\pi^*H^{2,0}}
	\rightarrow {\pi^*V^2\otimes\mathcal{O}_{V^0}(1)} \rightarrow 0
	\]
	where \(\pi\) is the projection map \(\pi:\mathcal{M}\longrightarrow B\).

	
	It follows that the obstruction bundle as a complex vector bundle has rank given by \(\text{rank}(\text{Obs})=h^1-h^2+\rho_g\), the total Chern class of the obstruction bundle is also given by
	\begin{equation}
		\label{eqn:chern of Obs}
		c(\text{Obs})=c(\pi^*H^{2,0})c(\pi^*V^1\otimes\mathcal{O}_{V^0}(1))c(\pi^*V^2\otimes\mathcal{O}_{V^0}(1))^{-1}.
	\end{equation}
	 The top degree term, which coincides with the Euler class of the obstruction bundle, is given by the sum of all products which are cohomology classes of degree \(\text{rank}_{\R}(\text{Obs})=2(h^1-h^2+\rho_g)\), i.e.
	 \begin{equation}
	 	\label{eqn:expansion of the e(Obs) term}
	 	e(\text{Obs})=\sum_{m=0}^{h^1-h^2+\rho_g}c_{h^1-h^2+\rho_g-m}(\pi^*H^{2,0})\phi_{m}.
	 \end{equation}
	 where \(\phi_m\) is the \(2m\)-th degree term of the product \(c(\pi^*V^1\otimes\mathcal{O}_{V^0}(1))c(\pi^*V^2\otimes\mathcal{O}_{V^0}(1))^{-1}\). This can be made further explicit, since if \(E\) is a vector bundle of rank \(r\) and \(L\) is a line bundle, then the \(i\)-th Chern class is obtained by
	\begin{equation}
		\label{eqn:expansion of the chern classes under the tensor product of a line bundle}
		c_i(E\otimes L)=\sum_{j=0}^i{r-i+j\choose j}c_{i-j}(E)c_1(L)^j.
	\end{equation}
	Given a vector bundle \(E\rightarrow B\), one may define the \textit{total Segre class} by \(s(E):=c(E)^{-1}\) where \(c(E)=c_0(E)+c_1(E)+c_2(E)+\dots\) is the total Chern class of \(E\). The \(i\)\textit{-th Segre class} is then defined as \(2i\)-th graded piece in cohomology of the total Segre class, that is, \(s_i(E)\in H^{2i}(B;\Z)\) is defined so that \(s(E)=s_0(E)+s_1(E)+\dots\). The \(s_i\) can inductively be computed by the relations
	\begin{align*}
		c_0(E)&=s_0(E)=1 \\
		c_1(E)&=-s_1(E) \\
		c_2(E)&=s_1(E)^2-s^2(E) \\
		\vdots  \\
		c_r(E)&=-s_1(E)c_{n-1}(E)-s_2(E)c_{n-2}(E)-\dots -s_n(E)
	\end{align*}
where \(s_j(E)=0\) for \(j<0\). There is a formula for the Segre classes of the tensor product of a vector bundle with a line bundle, similar to that for the Chern classes as follows
	\begin{equation}
		\label{eqn:expansion of the segre classes under the tensor product of a line bundle}
		s_i(E\otimes L)=\sum_{j=0}^i(-1)^{i-j}{r+i-1\choose r-1+j}s_j(E)c_1(L)^{i-j}.
	\end{equation}
	Consequently, the \(c(\pi^*V^2\otimes\mathcal{O}_{V^0}(1))^{-1}\) factor in \Cref{eqn:chern of Obs} is the total Segre class, \(s(\pi^*V^2\otimes\mathcal{O}_{V^0}(1))\).
	It follows that we have
	\[c(\pi^*V^1\otimes\mathcal{O}_{V^0}(1))=\sum_{\ell=0}^{h^1}
	\sum_{j=0}^{\ell}{h^1-\ell+j \choose j}c_{\ell-j}(\pi^*V^1)x^j\]
	where \(x=c_1(\mathcal{O}_{V^0}(1))\), and similarly
	\[c(\pi^*V^2\otimes\mathcal{O}_{V^0}(1))^{-1}=\sum_{p=0}^{\infty}\sum_{i=0}^p(-1)^{p-i}{h^2+p-1 \choose h^2+i-1}s_i(\pi^*V^2)x^{p-i}\]
	hence we obtain the following explicit expression for \(\phi\)
	\begin{equation}
		\label{eqn:expansion of the phi_m's}
		\begin{split}
			\phi_{m}=\sum_{p=0}^{m}\sum_{j=0}^p\sum_{i'=0}^{m-p}\Biggl[(-1)^{p-j}&{h^2+p-1 \choose h^2+j-1}{h^1-m+p+i' \choose i'} \\ &s_j(\pi^*V^2)c_{m-p-i'}(\pi^*V^1)x^{p+i'-j}\Biggr].
		\end{split}
	\end{equation}
	
	The other required object to compute the families Seiberg-Witten invariants is \(\pi_*(y^k)\). Since \(y=c_1(\mathcal{O}_{V^0}(-1))=-x\), it suffices to understand \(\pi_*(x^k)\).
	
	Since the fibres of \(\P(V^0)\longrightarrow B\) are \(h^0-1\) complex dimensional, for \(0\le k<h^0-1\) we have
	\[\pi_*(x^k)=0\]
	
	To obtain \(\pi_*(x^{h^0-1})\), consider the Euler sequence for the projectivisation of a vector bundle, this gives an exact sequence of vector bundles over \(\mathcal{M}=\P(V^0)\).
	\[\begin{tikzcd}
		0 & \C & {\mathcal{O}_{V^0}(1)\otimes \pi^*V^0} & {T(\P(V^0)/B)} & 0
		\arrow[from=1-1, to=1-2]
		\arrow[from=1-2, to=1-3]
		\arrow[from=1-3, to=1-4]
		\arrow[from=1-4, to=1-5]
	\end{tikzcd}\]
	due to the triviality of \(\C\) as a vector bundle it follows that
	\[c_{h^0-1}(T(\P(V^0)/B))=c_{h^0-1}(\mathcal{O}_{V^0}(1)\otimes \pi^*V^0).\]
	Since \(\mathcal{O}_{V^0}(1)\otimes V^0\) is a rank \(h^0\) vector bundle, expanding this further gives
	\begin{equation}
		\label{eqn:moduli space vertical tangent bundle chern class relation }
		c_{h^0-1}(T(\P(V^0)/B))=\sum_{j=0}^{h^0-1}{j+1 \choose j}c_{h^0-1-j}(\pi^*V^0)x^j.
	\end{equation}
	 Observe that \(c_{h^0-1}(T(\P(V^0)/B))\) is simply the Euler class of vertical tangent bundle, since it fibrewise restricts to the Euler class of the tangent bundle of the fibres. Since \(\P(V^0)_b\) is fibrewise isomorphic to \(\C\P^{h^0-1}\) applying \(\pi_*\) gives the Euler characteristic of \(\C\P^{h^0-1}\), thus the left hand side after applying \(\pi_*\) is \(h^0\). By the projection formula it then follows that
	\[h^0=\sum_{j=0}^{h^0-1}{j+1\choose j}c_{h^0-1-j}(V^0)\pi_*(x^j)\]
	and only the \(j=h^0-1\) term is non-zero, hence
	\[h^0={h^0\choose h^0-1}\pi_*(x^{h^0-1})=h^0\pi_*(x^{h^0-1})\]
	and so
	\begin{equation*}	
		\pi_*(x^{h^0-1})=1.
	\end{equation*}
	
	
	To compute higher powers of \(x\), we shall obtain a relation which allows us to reduce any power of \(x\) larger than \(h^0-1\) to a polynomial in \(x\) of degree at most \(h^0-1\).
	
	Since the total Chern classes of \(T(\P(V^0)/B)\) and \(\mathcal{O}_{V^0}(1)\otimes \pi^*V^0\) are equal, in particular their \(h^0\)-th Chern classes are equal. Moreover, the vertical tangent bundle is a rank \(h^0-1\) vector bundle and \(\mathcal{O}_V(1)\otimes V\) is a rank \(h^0\) vector bundle, it follows that
	\[c_{h^0}(\mathcal{O}_V(1)\otimes V)=0.\]
	However, expanding out the Chern class of the tensor product via the formula we have
	\[0=x^{h^0}+c_1(\pi^*V^0)x^{h^0-1}+...+c_{h^0}(\pi^*V^0)\]
	which is the desired relation.
	
	
	With this expression, \(\pi_*(x^{h^0-1+j})\) can be computed for \(j>0\), we shall show a general recursion relation. Let \[\tau_j:=\pi_*(x^{j+h^0-1})\] with the initial conditions that \(\tau_0=1\) and \(\tau_j=0\) for \(j<0\).
	
	Since \(x^{h^0}+c_1(\pi^*V^0)x^{h^0-1}+...+c_{h^0}(\pi^*V^0))=0\), it follows that
	\begin{align*}
		0&=\pi_*(x^{j-1}(x^{h^0}+c_1(\pi^*V)x^{h^0-1}+...+c_{h^0}(\pi^*V^0))) \\
		&=\tau_j+c_1(V^0)\tau_{j-1}+\dots c_{h^0}(V^0)\tau_{j-h^0}
	\end{align*}
	thus
	\[\tau_j=-(c_1(V^0)\tau_{j-1}+\dots c_{h^0}(V^0)\tau_{j-h^0})\]
	this is precisely the same recursion relation and initial conditions as the Segre classes \(s_j(V^0)\), hence
	\begin{equation}
		\label{eqn:fibre integral of powers of x}
		\pi_*(x^{j+h^0-1})=s_j(V^0).
	\end{equation}

	We now proceed to use the computation for \(\pi_*(x^k)\) to obtain an expression of the invariant. It follows from the projection formula and \Cref{eqn:expansion of the e(Obs) term} that
	\begin{align*}
		\FSW_n(E,L)=\sum_{m=0}^{h^1-h^2+\rho_g}c_{h^1-h^2+\rho_g-m}(H^{2,0})\Gamma_{m,n}
	\end{align*}
	where \[\Gamma_{m,n}:=\pi_*(y^n\phi_{m}).\] 
	Since \(y\) satisfies \(y^{h^0} -c_1(V^0)y^{h^0-1} + c_2(V^0)y^{h^0-2}+\dots = 0\) we obtain the desired recursion relation in the theorem statement
	\[\Gamma_{m,n+h^0} - c_1(V^0)\Gamma_{m,n+h^0-1} + c_2(V^0)\Gamma_{m,n+h^0-2}+\dots = 0.\]
	To further simplify the expression for the families Seiberg-Witten invariant, we wish to simplify the expression for \(\Gamma_{m,n}\). From the expressions for \(\phi_m\) and \(\pi_*(x^k)\) (\cref{eqn:expansion of the phi_m's} and \cref{eqn:fibre integral of powers of x} respectively), this in addition to the fact that \(y=-x\), yields the following expression for \(\Gamma_{m,n}\)
	\begin{equation}
		\label{eqn:Gamma mn 1}
		\begin{split}
			\Gamma_{m,n}=\sum_{p=0}^{m}\sum_{j=0}^p\sum_{i'=0}^{m-p}\Biggl[&(-1)^{n+p-j}{h^2+p-1 \choose p-j}{h^1-m+p+i' \choose i'} \\ &s_j(V^2)c_{m-p-i'}(V^1)s_{p+i'+n-j-h^0+1}(V^0)\Biggr].
		\end{split}
	\end{equation}
	
	
	
	
	
	
	
	We introduce the quantity \(\delta:=m+n-h^0+1\) which keeps track of the degree of \(\Gamma_{m,n}\). For \(\Gamma_{m,n}\) to be non-zero, we require \(\delta\ge 0\) or equivalently \(m+n\ge h^0-1\).
	
	We may adjust the indices in \Cref{eqn:Gamma mn 1} so that \(i'\) begins at \(m-p-\delta\) and \(j\) ends at \(\delta\). To see this, observe that \(0\le i'\le m-p\), implies that \(p+i'+n-j\le m+n-j\), but \(m+n=h^0-1+\delta\) and so \(p+i'+n-j\le h^0-1+\delta-j\). Since \(\pi_*(x^k)\) is non-zero only if \(k\ge h^0-1\), the only non-zero terms which survive in the sum occur when \(m-p-\delta\le i'\le m-p\) and \(0\le j\le \delta\). Moreover, since \({a\choose b}=0\) for \(a\ge0,b<0\), even if \(m-p-\delta<0\), we can start the \(i'\) index at \(m-p-\delta\). Similarly, the \(j\) index may end at \(\delta\). Hence \(\Gamma_{m,n}\) can be written as
	\begin{equation*}
		\begin{split}
			\Gamma_{m,n}=(-1)^n\sum_{p=0}^m\sum_{i'=m-p-\delta}^{m-p}\sum_{j=0}^{\delta}\Biggl[&(-1)^{p-j}{h^2+p-1\choose p-j}{h^1-m-p+i'\choose i'} \\
			&s_j(V^2)c_{m-p-i'}(V^1)s_{p+i'+n-j-h^0+1}(V^0)\Biggr]
		\end{split}
	\end{equation*}
	set \(i=i'-m+p+\delta\) then the sum over \(i'\) can be rewritten as a sum over \(i\), since the range of values for the \(j\) and \(i\) indices is independent of \(p\), we may freely change the position of the sum over \(p\), this in conjunction with the identity that \(\binom{a}{b}=(-1)^b\binom{b-a-1}{b}\) applied to the first binomial coefficient yields the following expression
	\begin{equation*}
		\begin{split}
			\Gamma_{m,n}=(-1)^n\sum_{i=0}^{\delta}\sum_{j=0}^{\delta}\sum_{p=0}^m&\Biggl[{-h^2-j\choose p-j}{h^1-\delta+i\choose m-\delta+i-p} \\
			&s_j(V^2)c_{\delta-i}(V^1)s_{i-j}(V^0)\Biggr].
		\end{split}
	\end{equation*}
	Since the terms from the sum over \(p\) are zero unless \(0\le p\le m-\delta+i\) and also \(p\ge j\), the above expressions can be rewritten as
	\begin{equation*}
		\begin{split}
			\Gamma_{m,n}=(-1)^n\sum_{i=0}^{\delta}\sum_{j=0}^{\delta}\Biggl[&s_j(V^2)c_{\delta-i}(V^1)s_{i-j}(V^0) \\ &\sum_{p=j}^{m-\delta+i}{-h^2-j\choose p-j}{h^1-\delta+i\choose m-\delta+i-p}\Biggr]
		\end{split}
	\end{equation*}
	reindexing the sum over \(p\) as \(p'=p-j\) then gives
	\begin{equation*}
		\begin{split}
			\Gamma_{m,n}=(-1)^n\sum_{i=0}^{\delta}\sum_{j=0}^{\delta}&\Biggl[s_j(V^2)c_{\delta-i}(V^1)s_{i-j}(V^0) \\
			&\sum_{p'=0}^{m-\delta+i-j}{-h^2-j\choose p'}{h^1-\delta+i\choose m-\delta+i-j-p'}\Biggr]
		\end{split}.
	\end{equation*}
	The second binomial coefficient is zero unless \(m-\delta+i-j-p'\ge 0\), so for \(\Gamma_{m,n}\) to be non-zero, we require \(\delta-m+i\ge 0\), in such a case \(i\ge\max\{m-\delta,0\}\). When this occurs \(\delta-m+i\ge 0\) and we must also have \(\delta-m+i-j\ge 0\) for \(\Gamma_{m,n}\) to be non-zero. Furthermore, \(s_{i-j}(V^0)=0\) for \(i\le j\), thus \(j\le \min\{i,m-\delta+i\}\) and the sum may be readjusted to
	\begin{equation*}
		\begin{split}
			\Gamma_{m,n}=(-1)^n\sum_{i=\max\{\delta-m,0\}}^{\delta}\sum_{j=0}^{\min\{i,m-\delta+i\}}&\Biggl[s_j(V^2)c_{\delta-i}(V^1)s_{i-j}(V^0) \\
			&\sum_{p'=0}^{m-\delta+i-j}{-h^2-j\choose p'}{h^1-\delta+i\choose m-\delta+i-j-p'}\Biggr]
		\end{split}.
	\end{equation*}
	With the above restrictions on \(i\) and \(j\), we have \(m-\delta+i-j\ge0\), so applying the Vandermonde-Chu identity gives.
	\begin{align*}
		\Gamma_{m,n}&=(-1)^n\sum_{i=\max\{\delta-m,0\}}^{\delta}\sum_{j=0}^{\min\{i,m-\delta+i\}}s_j(V^2)c_{\delta-i}(V^1)s_{i-j}(V^0){h^1-h^2-\delta+i-j\choose m-\delta+i-j}
	\end{align*}
which is the desired formula.
\end{proof}

\begin{remark}
	Note that \(c_{\delta-i}(V^1)\) is only non-zero for \(0\le \delta-i\le h^1\), so the first sum in the expression for \(\Gamma_{m,n}\) may actually be restricted to begin from \(i=\max\{\delta-m,\delta-h^1,0\}\), although we do not include this in the above theorem so that the expression is more compact.
\end{remark}
	
	\begin{remark}
		The computation of \Cref{thm:general computation of the families Seiberg-Witten invariant} indeed reduces to the result of \Cref{thm:general computation of the SW invariant on Kahler surfaces} when \(B\) is a point. In such a case the vector bundles \(H^2,V^i\) are all trivial so the only possibly non-vanishing invariant occurs when \(m=h^1-h^2+\rho_g\), \(i=j=\delta=0\) and \(n=\rho_g+1-\chi(L)\), i.e. the ordinary Seiberg-Witten invariant, which is given by \(\Gamma_{h^1-h^2+\rho_g,\rho_g+1-\chi(L)}\). It is clear from the formula in \Cref{thm:general computation of the families Seiberg-Witten invariant} that this recovers the result in the unparametrised case.
	\end{remark}
%
%
%

\section[The Families Invariants for the Projectivisation Family and Family with Fibres \(\C\P^1\times\C\P^1\)]{The Families Seiberg-Witten Invariants for some families with fibres \(\C\P^2\) and \(\C\P^1\times\C\P^1\)}
\label{sec:examples 1}
	With a general computation for the families Seiberg-Witten invariants in hand, we shall now apply it to example cases of K\"ahler families. The first K\"ahler family we consider is the projectivisation of a rank 3 complex vector bundle. Let \(\widetilde{\pi}:V\longrightarrow B\) be a complex rank 3 vector bundle over some compact base space \(B\). We may define its projectivisation \(\pi:\P(V)\longrightarrow B\) by taking the fibres to be the projectivisation of the fibres of \(V\), that is \(\P(V)_b:=\P(V_b)\). 
	
	\begin{proposition}
		\label{prop:projectivisation family is a kahler family}
		Let \(V\longrightarrow B\) be a complex rank 3 vector bundle where \(B\) is compact. Then the projectivisation family \(\pi:\P(V)\longrightarrow B\) is a smooth K\"ahler family.
	\end{proposition}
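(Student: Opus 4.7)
The plan is to exhibit the three pieces of data required by the definition of a smoothly varying K\"ahler structure, namely a fiberwise metric $g$, an almost complex structure $J$ on $T(\mathbb{P}(V)/B)$ that is integrable on each fiber, and the induced $(1,1)$-form $\omega$ which is closed on each fiber. That $\pi\colon \mathbb{P}(V) \to B$ is a smooth locally trivial fiber bundle with fiber $\mathbb{CP}^2$ is immediate: any trivialization $V|_U \cong U \times \mathbb{C}^3$ of the vector bundle induces a trivialization $\mathbb{P}(V)|_U \cong U \times \mathbb{CP}^2$, with transition functions taking values in $PGL(3,\mathbb{C})$.

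First I would construct the almost complex structure. The $GL(3,\mathbb{C})$-valued transition functions of $V$ descend to $PGL(3,\mathbb{C})$-valued transition functions of $\mathbb{P}(V)$, and these act on $\mathbb{CP}^2$ by biholomorphisms. Consequently the standard complex structure on $\mathbb{CP}^2$ is preserved under the gluing and defines an almost complex structure $J$ on the vertical tangent bundle $T(\mathbb{P}(V)/B)$ whose restriction to the fiber $X_b = \mathbb{P}(V_b)$ is the ordinary complex structure on $\mathbb{CP}^2$, which is integrable.

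Next I would choose a Hermitian metric $h$ on $V$. This reduces the structure group of $V$ from $GL(3,\mathbb{C})$ to $U(3)$. The Fubini-Study metric $g_{FS}$ and its associated K\"ahler form $\omega_{FS}$ on $\mathbb{CP}^2$ are $U(3)$-invariant, so after the structure-group reduction they glue to give a smooth fiberwise Riemannian metric $g$ on $T(\mathbb{P}(V)/B)$ and a smooth section $\omega$ of $\Lambda^2 T^*(\mathbb{P}(V)/B)$. On each fiber the pair $(g|_{X_b}, J|_{X_b})$ is the standard Fubini-Study K\"ahler pair on $\mathbb{CP}^2$, so $g$ is compatible with $J$ and the induced $(1,1)$-form $\omega(v,w) = g(Jv,w)$ restricts on each fiber to $\omega_{FS}$, which is closed.

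The construction is essentially mechanical; the only point requiring care is the smooth-glueing step, namely that the $U(3)$-invariance of $g_{FS}$ and $\omega_{FS}$ on $\mathbb{CP}^2$ is enough to produce genuinely smooth fiberwise objects on $T(\mathbb{P}(V)/B)$. This is handled by computing in a local unitary frame of $(V,h)$ and observing that transitions between two such frames act fiberwise by elements of $U(3)$, under which $g_{FS}$ and $\omega_{FS}$ are invariant, so the locally defined structures agree on overlaps. No further obstacle arises: integrability on fibers, compatibility, and fiberwise closedness of $\omega$ all reduce to the corresponding well-known facts for the Fubini-Study structure on $\mathbb{CP}^2$.
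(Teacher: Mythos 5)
Your proof is correct, but it takes a genuinely different route from the paper's. You build the fibrewise K\"ahler structure by hand: the \(GL(3,\C)\)-valued transitions act on \(\C\P^2\) by biholomorphisms and hence glue the standard complex structure into a fibrewise-integrable \(J\) on \(T(\P(V)/B)\), while a Hermitian metric on \(V\) reduces the structure group to \(U(3)\), under which the Fubini--Study metric and form are invariant, so they glue to smooth fibrewise tensors; compatibility, integrability and fibrewise closedness then all reduce to the standard \(\C\P^2\) facts. The paper starts from the same choice of a Hermitian metric on \(V\) but instead transfers it to the line bundle \(\mathcal{O}_V(1)\to\P(V)\) and takes the Chern curvature of the resulting metric: this is automatically a closed \((1,1)\)-form on the total space whose restriction to each fibre is the Fubini--Study form, hence is positive along the vertical tangent bundle, and the fibrewise metric is recovered from \(\omega\) and \(J\). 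The two constructions yield essentially the same structure. Your equivariant-gluing argument is more elementary and makes the smoothness of the glued objects completely explicit (the one point you rightly flag), at the cost of checking each axiom separately; the paper's curvature argument is shorter and gives something slightly stronger for free, namely a globally closed \(2\)-form on the total space rather than merely a fibrewise-closed one, which is the standard mechanism for producing relative K\"ahler forms on projective bundles.
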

	
	\begin{proof}
		
		Since each \(V_b\) is a 3 dimensional vector space, \(V_b\cong \C^3\) and hence the fibres of \(\P(V)\) are diffeomorphic to \(\C\P^2\), hence this is a family of \(\C\P^2\)'s. This diffeomorphism also preserves the natural orientations defined on the fibres and \(\C\P^2\), consequently, this is a smooth family. The cohomology ring of \(\C\P^2\) is well known with \(b^+=1\) and \(b_1(\C\P^2)=0\) so it remains to show this family has a K\"ahler structure. 
		
		Choose a Hermitian metric on \(V\), there is then a naturally induced metric on the tautological line bundle \(\mathcal{O}_V(1)\longrightarrow \P(V)\). Consider the Chern curvature with respect to this metric, since the restriction of \(\mathcal{O}_V(1)\) to the fibres of \(\P(V)\) is isomorphic to the tautological line bundle over \(\C\P^2\) and the Chern curvature of any metric on the hyperplane line bundle over \(\C\P^2\) is the closed positive \((1,1)\) form induced by the Fubini-Study metric. Consequently, the Chern curvature of the metric on \(\mathcal{O}_V(1)\) must be a closed \((1,1)\) form which is positive along the vertical tangent bundle, hence the projectivisation family is a smooth K\"ahler family.
	\end{proof}
We now present a computation for the families Seiberg-Witten invariants for the projectivisation family. We have -Hirsch theorem
\[H^2(E;\Z)\cong H^2(B;\Z)\oplus \Z\]
where the isomorphism is given by
\[H^2(B;\Z)\oplus \Z\ni(L,m)\mapsto \mathcal{O}_V(m)\otimes \pi^*L\]
consequently, any line bundle over \(\P(V)\) is of the form:
\[\mathcal{O}_V(m)\otimes \pi^*L\]
for some line bundle \(L\longrightarrow B\). The following theorem contains the computation of the families Seiberg-Witten invariants for the \(\text{spin}^c\) structures determined by such line bundles.
\begin{theorem}
	\label{thm:computation of the FSW for the projectivisation family}
	Let \(\pi:V\rightarrow B\) be a complex vector bundle of rank \(3\) and \(\P(V)\rightarrow B\) be the corresponding projectivisation family and \(S^k(V^*)\) be the \(k\)-th symmetric power of \(V^*\). Then, the families Seiberg-Witten invariants in the K\"ahler chamber for the \(\text{spin}^c\) structure obtained by twisting the canonical structure by \(\mathcal{O}_V(k)\otimes\pi^*L\) are given by
	\begin{equation}
		\FSW_n(E,\mathcal{O}_V(k)\otimes\pi^*L)=(-1)^ns_{n-h^0+1}(S^k(V^*)\otimes L)
	\end{equation}
	where \(h^0=\binom{2+k}{k}=(k+1)(k+2)/2\), when \(k\ge 0\) and are zero otherwise.
\end{theorem}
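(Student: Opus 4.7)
The strategy is to apply \Cref{thm:general computation of the families Seiberg-Witten invariant} directly and observe that the hypotheses on the cohomology of $\C\P^2$ force almost every term in the general formula to vanish. First I would verify that the projectivisation family meets the hypotheses of the main theorem. By \Cref{prop:projectivisation family is a kahler family} it is already a K\"ahler family with $b_1(\C\P^2)=0$. The restriction of $\mathcal{O}_V(k)\otimes\pi^*L$ to a fibre $\P(V_b)\cong\C\P^2$ is $\mathcal{O}_{\P(V_b)}(k)\otimes L_b$, whose first Chern class is represented by (a multiple of) the Fubini--Study form and hence is of type $(1,1)$. By the classical computation of the cohomology of line bundles on $\C\P^2$, for $k\ge 0$ one has
\[h^0(\C\P^2,\mathcal{O}(k))=\binom{k+2}{2},\qquad h^1(\C\P^2,\mathcal{O}(k))=0,\qquad h^2(\C\P^2,\mathcal{O}(k))=0,\]
which are constant in $b$, so the constancy hypothesis is satisfied and the vector bundles $V^0,V^1,V^2$ over $B$ are defined (with $V^1=V^2=0$). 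Finally, $\rho_g(\C\P^2)=0$, so $H^{2,0}=0$ as well.

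Next I would insert these values into \Cref{eqn:general computation of the families Seiberg-Witten invariant}. Because $h^1-h^2+\rho_g=0$, the outer sum over $m$ collapses to the single term $m=0$, giving $\FSW_n=\Gamma_{0,n}$. In the formula for $\Gamma_{0,n}$, the factor $c_{\delta-i}(V^1)$ vanishes unless $\delta-i=0$, and the factor $s_j(V^2)$ vanishes unless $j=0$. Hence only $i=\delta$, $j=0$ survives, and with $\delta=n-h^0+1$ and $\binom{0}{0}=1$ one obtains
\[\Gamma_{0,n}=(-1)^n s_{n-h^0+1}(V^0).\]
This requires $\delta\ge 0$, i.e.\ $n\ge h^0-1$, as is implicit in the range $1\le h^0\le m+n+1$ of the main theorem; in the complementary range the invariant vanishes automatically (the moduli space is empty when $k<0$ since then $h^0=0$ and $\mathcal{M}=\P(V^0)$ is empty, and Segre classes of negative index vanish).

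The last step is to identify $V^0$ as a smooth vector bundle over $B$. Fibrewise,
\[V^0_b=H^0(\P(V_b),\mathcal{O}_{\P(V_b)}(k)\otimes L_b)\cong S^k(V_b^*)\otimes L_b,\]
using the standard isomorphism $H^0(\P(W),\mathcal{O}(k))\cong S^k(W^*)$ for a complex vector space $W$. These isomorphisms vary smoothly in $b$ because they come from the natural pairing between symmetric tensors on $V^*$ and homogeneous polynomials on $V$, which is defined at the level of vector bundles. Hence $V^0\cong S^k(V^*)\otimes L$ as smooth (in fact holomorphic wherever this makes sense) complex vector bundles over $B$, and substituting gives the claimed formula
\[\FSW_n(E,\mathcal{O}_V(k)\otimes\pi^*L)=(-1)^n s_{n-h^0+1}(S^k(V^*)\otimes L).\]
No step here is genuinely hard; the main thing to be careful about is the bookkeeping with the vanishing ranges ($k<0$ versus $k\ge 0$, and $n<h^0-1$), and the identification of $V^0$ with $S^k(V^*)\otimes L$ in families rather than just fibrewise.
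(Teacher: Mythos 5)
Your proposal is correct and follows essentially the same route as the paper: verify the hypotheses of \Cref{thm:general computation of the families Seiberg-Witten invariant}, use \(\rho_g=0\) and \(h^1=h^2=0\) to collapse the general formula to \(\Gamma_{0,n}=(-1)^ns_{n-h^0+1}(V^0)\), and identify \(V^0\cong S^k(V^*)\otimes L\) (the paper gets this global identification from the Serre spectral sequence plus the projection formula, whereas you argue it via the natural bundle-level pairing, which amounts to the same thing). The handling of the vanishing cases \(k<0\) and \(n<h^0-1\) also matches the paper's argument.
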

\begin{remark}
	The expression of the previous theorem can be made more explicit by expanding \(s_{n-h^0+1}(S^k(V^*)\otimes L)\) via \Cref{eqn:expansion of the segre classes under the tensor product of a line bundle}, denoting the invariants corresponding to the line bundle \(\mathcal{O}_V(k)\otimes\pi^*L\) by \(\FSW_n(E,L,k)\) we have
	\begin{equation}
		\FSW_n(E,L,k)=\sum_{i=0}^{n-h^0+1}(-1)^{h^0-1+i}\binom{n}{h^0-1+i}s_i(S^k(V^*))c_1(L)^{n-h^0+1-i}
	\end{equation}
	the only obstruction to making this formula even more explicit is obtaining computing the Segre classes of \(S^k(V^*)\), although this is possible in principle, it becomes increasingly difficult and quite cumbersome for larger and and larger \(k\).
\end{remark}
\begin{proof}[Proof of Theorem 5.2]
 Recall that we are interested in the vector bundles \(V^i\) with fibres over \(b\in B\) being the cohomology groups \(H^i(X_b,L_b)\), denote these cohomology bundles by
	\[V^j=H^j(\P(V),\mathcal{O}_V(k)\otimes \pi^*L)\]
	where \(\pi\) is the projection map of the projectivisation family \(\P(V)\rightarrow B\) induced by \(\widetilde{\pi}\). When \(k<0\) the line bundles \(\mathcal{O}(k)\) over \(\C\P^2\) do not admit global sections, hence \(H^0(\C\P^2,\mathcal{O}(k))=0\), this fiberwise calculation implies that
	\[H^0(\P(V),\mathcal{O}_V(k))=0\]
	and so \(h^0=0\) when \(k<0\). Since the dimension of the fiberwise moduli spaces is \(h^0-1\), they are empty for \(k<0\), hence the families Seiberg-Witten invariant is zero. It is then sufficient to consider the cohomology groups for \(k\ge 0\).
	
	By the Serre spectral sequence, we have for \(k\ge 0\)
	\[H^j(\P(V),\mathcal{O}_V(k))\cong
	\begin{cases}
		S^k(V^*) & j=0, \\
		0 & j>0.
	\end{cases}\]
	It follows from the projection formula that after twisting by the pullback of a line bundle \(L\) over \(B\)
	\[V^j=H^j(\P(V),\mathcal{O}_V(k)\otimes \pi^*L)\cong
	\begin{cases}
		S^k(V^*)\otimes L & j=0 \\
		0 & j>0
	\end{cases}.\]
	It immediately follows that \(h^1=h^2=0\). Moreover \(h^0\) is just the rank of \(S^k(V^*)\). Since \(V\) is a 3 dimensional complex vector bundle
	\begin{equation*}
		\text{rank}(S^k(V^*))=\binom{2+k}{k}=\frac{(k+1)(k+2)}{2}
	\end{equation*}
	

	We now proceed to obtain a more explicit expression for the families Seiberg-Witten invariants for the projectivisation family. Recall from \Cref{thm:general computation of the families Seiberg-Witten invariant} that
	\[\FSW_n(E,\mathcal{O}_V(k)\otimes\pi^*L)=\sum_{m=0}^{h^1-h^2+\rho_g}c_{h^1-h^2+\rho_g-m}(H^{2,0})\Gamma_{m,n}\]
	where \(H^2\) is the bundle with fibres \(H^2(X_b,\mathcal{O})\). It is a well known fact that \(b^+(\C\P^2)=1\), consequently \(\rho_g(\C\P^2)=0\), hence \(h^1-h^2+\rho_g=0\) and \(m\) is forced to be zero, thus
	\[\FSW_n(E,\mathcal{O}_V(k)\otimes\pi^*L)=\Gamma_{0,n}\]
	and \(\delta=n-h^0+1\). Applying the expression for \(\Gamma_{m,n}\) in \Cref{thm:general computation of the families Seiberg-Witten invariant} gives
	\begin{equation*}
		\FSW_n(E,\mathcal{O}_V(k)\otimes\pi^*L)=(-1)^n\sum_{i=\max\{\delta,0\}}^{\delta}\sum_{j=0}^{\min\{i,-\delta+i\}}s_j(V^2)c_{\delta-i}(V^1)s_{i-j}(V^0).
	\end{equation*}
	Recall that \(V^1\) and \(V^2\) are zero bundles, so the only surviving terms occur when \(j=0,i=n-h^0+1\). This results in the following computation of the families Seiberg-Witten invariants for the projectivisation family
	\begin{equation}
		\label{eqn:simple expression for the FSW of the projectivisation family}
		\FSW_n(E,\mathcal{O}_V(k)\otimes\pi^*L)=(-1)^ns_{n-h^0+1}(V^0)
	\end{equation}
	yielding the result.
\end{proof}

	%
	%
	%
	%
	We now investigate another family, the analysis here proceeds similarly to the previous example. Suppose that \(\pi_1:V_1\longrightarrow B\) and \(\pi_2:V_2\longrightarrow B\) are rank 2 complex vector bundles over \(B\) and consider their fibre product \(\Pi:\P(V^1)\times_B \P(V^2)\longrightarrow B\). This also has a K\"ahler structure, since as in \Cref{thm:computation of the FSW for the projectivisation family} we may endow \(\P(V^1)\) and \(\P(V^2)\) with K\"ahler structures which induces one on the product and consequently the fibre product. The Serre spectral sequence gives
	\[H^2(E;\Z)\cong H^2(B;\Z)\times \Z\times \Z\]
	where the map sends \(H^2(B;\Z)\times \Z\times \Z\ni(L,m,n)\mapsto \mathcal{O}_{V_1}(k)\otimes \mathcal{O}_{V_2}(\ell)\otimes \Pi^*L.\) Hence, we have the following theorem
	\begin{theorem}
		\label{thm:computation of the FSW for the CP1xCP1 family}
		Let \(\pi:V_1,V_2\rightarrow B\) be complex vector bundles of rank \(2\), \(S^k(V_i^*)\) be the \(k\)-th symmetric power of \(V_i^*\) and \(\P(V_1)\times_B\P(V_1)\rightarrow B\) be the family obtained from the fibre product of their projectivisations. Then the families Seiberg-Witten invariants in the K\"ahler chamber for the \(\text{spin}^c\) structure obtained by twisting the canonical structure by \(\mathcal{O}_{V_1}(k)\otimes\mathcal{O}_{V_2}(\ell)\otimes\Pi^*(L)\) are given by
		\begin{equation}
			\FSW_n(E,\mathcal{O}_{V_1}(k)\otimes\mathcal{O}_{V_2}(\ell)\otimes \Pi^*L)=(-1)^n\text{s}_{n-h^0+1}(S^k(V_1^*)\otimes S^{\ell}(V_2^*)\otimes L)
		\end{equation}
		for \(k,\ell\ge 0\), where \(h^0=(1+k)(1+\ell)\) and are zero otherwise.
	\end{theorem}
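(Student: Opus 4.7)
The plan is to mirror the proof of Theorem 5.2, replacing $\C\P^2$ by $\C\P^1\times\C\P^1$ and the single projectivisation map by the fibre-product projection $\Pi = \pi_1\circ q_1 = \pi_2\circ q_2$, where $q_i\colon\P(V_1)\times_B\P(V_2)\to \P(V_i)$ are the two projections of the fibre product and $\pi_i\colon \P(V_i)\to B$ are the individual projectivisation maps. First I would verify that $\Pi$ is a smooth K\"ahler family by imitating Proposition 5.1: equip each $V_i$ with a Hermitian metric, take the Chern-curvature $(1,1)$-form $\omega_i$ on $\P(V_i)$ of the induced metric on $\mathcal{O}_{V_i}(1)$, and observe that $q_1^*\omega_1 + q_2^*\omega_2$ is closed on $E$ and restricts fibrewise to a positive $(1,1)$-form on $\C\P^1\times\C\P^1$. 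Since $b_1(\C\P^1\times\C\P^1)=0$ and $b^+=1$, so that $\rho_g=0$, the hypotheses of Theorem 4.1 apply as soon as the cohomology bundles are known.

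Next I would compute the bundles $V^j$ using the Leray spectral sequence for $\Pi=\pi_1\circ q_1$, combined with flat base change and the projection formula. Base change along the cartesian square defining $E$ gives
\[
R^j q_{1,*}\bigl(q_2^*\mathcal{O}_{V_2}(\ell)\bigr) = \pi_1^* R^j \pi_{2,*}\mathcal{O}_{V_2}(\ell),
\]
which for $\ell\ge 0$ equals $\pi_1^* S^\ell(V_2^*)$ in degree zero and vanishes otherwise. Twisting by $\mathcal{O}_{V_1}(k)$ and applying the projection formula followed by the vanishing of $R^{>0}\pi_{1,*}\mathcal{O}_{V_1}(k)$ for $k\ge 0$ gives
\[
R^j \Pi_*\bigl(\mathcal{O}_{V_1}(k)\otimes \mathcal{O}_{V_2}(\ell)\bigr) = \begin{cases} S^k(V_1^*)\otimes S^\ell(V_2^*) & j=0,\\ 0 & j>0,\end{cases}
\]
for $k,\ell\ge 0$. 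A further application of the projection formula to twist by $\Pi^*L$ then yields $V^0 = S^k(V_1^*)\otimes S^\ell(V_2^*)\otimes L$ and $V^1 = V^2 = 0$, so $h^0=(k+1)(\ell+1)$ and $h^1=h^2=0$.

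Finally I would feed this data into Theorem 4.1. With $\rho_g=h^1=h^2=0$, the outer sum in the main formula collapses to $m=0$, so $\FSW_n(E,L)=\Gamma_{0,n}$ with $\delta = n-h^0+1$. In the explicit formula for $\Gamma_{0,n}$, the factors $s_j(V^2)$ and $c_{\delta-i}(V^1)$ vanish unless $j=0$ and $i=\delta$, leaving only $(-1)^n s_{n-h^0+1}(V^0)$, which is the claimed expression. When $k<0$ or $\ell<0$ the fibrewise $H^0$ vanishes by K\"unneth, so $h^0=0$, the families moduli space $\P(V^0)$ is empty, and the invariant is zero. No step is genuinely hard: the argument is structurally identical to that of Theorem 5.2, and the only point that requires care is the globalisation of the fibrewise K\"unneth statement to an identification of vector bundles over $B$, which the iterated projection-formula computation above takes care of.
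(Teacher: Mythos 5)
Your proposal is correct and follows essentially the same route as the paper: establish the K\"ahler structure on the fibre product, identify \(V^0=S^k(V_1^*)\otimes S^{\ell}(V_2^*)\otimes L\) with \(V^1=V^2=0\) (hence \(h^0=(k+1)(\ell+1)\), \(h^1=h^2=\rho_g=0\)), and collapse the formula of \Cref{thm:general computation of the families Seiberg-Witten invariant} to \((-1)^ns_{n-h^0+1}(V^0)\). The only difference is cosmetic: where the paper invokes a K\"unneth formula for sheaf cohomology to identify the cohomology bundles, you derive the same identification explicitly via flat base change, the projection formula, and the Leray spectral sequence for \(\Pi=\pi_1\circ q_1\).
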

	Similar to the previous example, this formula can be made more explicit via the formula for the Chern classes of products of vector bundles and expressions for the Chern classes of \(S^k(V_1^*)\) and \(S^\ell(V_2^*)\), but identically, it also becomes increasingly cumbersome as \(k,\ell\) become larger to compute the Segre classes of the symmetric powers of \(V_1\) and \(V_2\).
	\begin{proof}[Proof of Theorem 5.4]
		As in \Cref{thm:computation of the FSW for the projectivisation family} unless \(k,\ell\ge 0\) we have \(h^0=0\) due to a lack of sections., yielding a zero Seiberg-Witten invariant. In the case when \(k,\ell\ge 0\), one may use the computation of the cohomology of line bundles as in the previous section on projective bundles and a K\"unneth formula for sheaf cohomology to obtain that the cohomology bundles are given by
		\[H^j(E;\mathcal{O}_{V_1}(k)\otimes \mathcal{O}_{V_2}(\ell))=\begin{cases}
			S^k(V_1^*)\otimes S^{\ell}(V_2^*) & j=0 \\
			0 & j>0
		\end{cases}\]
		the general case when twisting by a line bundle involves in tensoring by the line bundle \(L\) via the projection formula as before. Consequently, \(h^1=h^2=0\) and \(h^0=(1+k)(1+\ell)\), and by identical reasoning to that in \Cref{thm:computation of the FSW for the projectivisation family} we obtain that
		\[FSW_n(E,\mathcal{O}_{V_1}(k)\otimes\mathcal{O}_{V_2}(\ell)\otimes \Pi^*L)=(-1)^ns_{n-h^0+1}(V^0)\]
		where \(V^0=S^k(V_1^*)\otimes S^{\ell}(V_2^*)\otimes L\) from which the result follows.
	\end{proof}

	\section[The Families Invariants for Families of Blowups of K\"ahler Surfaces]{The Families Seiberg-Witten Invariants for Families of Blowups of K\"ahler Surfaces}
	\label{sec:families blowup}
	Given a K\"ahler surface \(X\) and \(x\in X\), one can construct the blowup of \(X\) at \(x\), denoted \(\text{Bl}_x(X)\). There is a natural projection map \(p:\text{Bl}_x(X)\rightarrow X\), let \(E=p^{-1}(x)\) be the exceptional divisor and \(\mathcal{O}(E)\) to be the line bundle over \(\text{Bl}_x(X)\) corresponding to the exceptional divisor. There are a variety of K\"ahler families one can obtain from this construction. Of primary interest shall be the \textit{universal blowup family}, this is a smooth family \(\pi:Z\longrightarrow X\) with base \(X\) and fibre at \(x\in X\) diffeomorphic to \(\text{Bl}_x(X)\). This is a special case of a more general construction of a sequence of spaces \(X_{\ell}\) with fibres diffeomorphic to the blowup of \(X\) at \(\ell\) possibly non-distinct points as in \cite{liu_family_2000}. The universal blowup family may be constructed as follows, let \(\Delta:X\longrightarrow X\times X\) be the diagonal section \(x\mapsto (x,x)\) and consider \(Z:=\text{Bl}_{\Delta(X)}(X\times X)\), the blowup of \(X\times X\) along the image of the diagonal section, this has a natural projection map \(\Pi:Z\longrightarrow X\times X\). We then define the universal blowup family to be the smooth fibre bundle \(\pi:Z\longrightarrow X\) where \(\pi=\Pi\circ p_2\) and \(p_2:X\times X\longrightarrow X\) is the projection onto the right factor. 

Given \(x\in X\), a fibre of the universal blowup family is \(\pi^{-1}(x)=\text{Bl}_x(X)\times\{x\}\cong \text{Bl}_x(X)\), so it is indeed a family with fibres diffeomorphic to the blowup of \(X\) at \(x\) and it satisfies a universal property, namely if \(Z'\longrightarrow B\) is another family where the fibre over \(b\in B\) is \(\text{Bl}_{f(b)}(X)\) for some smooth map \(f:B\rightarrow X\), then \(Z'\) is the pullback of \(\pi:Z\longrightarrow X\) under \(f:B\longrightarrow X\). Consequently, to compute the families Seiberg-Witten invariants of such blowup families specified by a smooth map \(f\), one simply needs to compute the pullback of the invariants of the universal blowup family. Because of this, we seek to compute the families Seiberg-Witten invariants of the universal blowup family when \(X\) is simply connected, first, consider the following lemma.
\begin{lemma}
	\label{prop:H2 of families with simply connected base and fibre}
	Let \(X\hookrightarrow E\longrightarrow B\) be a family of 4-manifolds with \(X\) and \(B\) both simply-connected and \(H^3(B;\Z)=0\) then
	\[H^2(X;\Z)\cong H^2(B;\Z)\oplus H^2(X;\Z)\]
\end{lemma}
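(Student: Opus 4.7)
The plan is to apply the Serre spectral sequence of the fibration $X \hookrightarrow E \to B$, whose $E_2$-page reads
\[
  E_2^{p,q} = H^p(B;H^q(X;\Z))
\]
and which converges to $H^{p+q}(E;\Z)$. Because $B$ is simply connected, the local coefficient system is trivial, so I can treat the $E_2$ entries as ordinary cohomology of $B$ with constant coefficients in $H^q(X;\Z)$.

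The three a priori non-trivial contributions to $H^2(E;\Z)$ are $E_\infty^{2,0}$, $E_\infty^{1,1}$, and $E_\infty^{0,2}$. First I would observe that $E_2^{1,1} = H^1(B;H^1(X;\Z)) = 0$, since both $H^1(B;\Z)$ and $H^1(X;\Z)$ vanish by the simple-connectedness hypotheses. Next, I would verify that every differential entering or leaving $E_r^{2,0}$ or $E_r^{0,2}$ is zero: the only potential differential into $E_r^{2,0}$ is $d_2$ from $E_2^{0,1} = H^1(X;\Z) = 0$, and differentials out of $E_r^{0,2}$ land in $E_r^{r,3-r}$, which for $r=2$ is $H^2(B;H^1(X;\Z)) = 0$, for $r=3$ is $H^3(B;\Z) = 0$ by hypothesis, and for $r \ge 4$ vanishes on degree grounds. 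Hence $E_\infty^{2,0} = H^2(B;\Z)$ and $E_\infty^{0,2} = H^2(X;\Z)$.

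Assembling these into the filtration of $H^2(E;\Z)$ yields a short exact sequence
\[
  0 \longrightarrow H^2(B;\Z) \longrightarrow H^2(E;\Z) \longrightarrow H^2(X;\Z) \longrightarrow 0,
\]
in which the left map is identified with $\pi^*$ and the right map with restriction to a fibre. To upgrade this extension to a direct sum, I would note that since $X$ is simply connected, $H_1(X;\Z)=0$, so by the universal coefficient theorem $H^2(X;\Z) \cong \mathrm{Hom}(H_2(X;\Z),\Z)$ is free abelian; any extension of abelian groups with free quotient splits. I do not foresee any serious difficulty: the only subtlety worth isolating is that the hypothesis $H^3(B;\Z)=0$ is used precisely to kill the $d_3$-differential $E_3^{0,2}\to E_3^{3,0}$, while the simple-connectedness of $X$ both annihilates the $E^{1,1}$-term and delivers the freeness needed for the splitting.
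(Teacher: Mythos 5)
Your proposal is correct and follows essentially the same route as the paper: the Leray--Serre spectral sequence with trivial coefficients, the vanishing of $H^1(X;\Z)$ and $H^3(B;\Z)$ to force $E_\infty^{2,0}=H^2(B;\Z)$ and $E_\infty^{0,2}=H^2(X;\Z)$, and the universal coefficient theorem to see that $H^2(X;\Z)$ is free so the resulting extension splits. The only cosmetic difference is that you kill the $d_3$ differential $E_3^{0,2}\to E_3^{3,0}$ directly, whereas the paper phrases the same use of $H^3(B;\Z)=0$ via the four-term low-degree exact sequence.
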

\begin{proof}
	Since \(X\) is simply-connected, by the Hurcewicz theorem, \(H_1(X;\Z)=0\), then by Poincar\'e duality \(H^3(X;\Z)=0\). Since \(X\) is compact its homology groups are finitely generated so by the universal coefficient theorem, it follows that \(H^1(X;\Z)\cong \text{Hom}(H_1(X;\Z),\Z)\oplus T_0\) where \(T_0\) is the torsion subgroup of \(H_0(X;\Z)\), but \(X\) is connected so \(H_0(X;\Z)=\Z\) which has no torsion. Hence \(H^1(X;\Z)=0\). Since \(B\) is simply-connected, the action of \(\pi_1(B)\) on the fibres is trivial and the \(E^2\) page of the Leray-Serre spectral sequence consists of \(E^2_{p,q}=H^p(B;H^q(X;\Z))\) and abuts to \(H^{p+q}(E;\Z)\).
	
		
		
		
	
	The differentials all map to or out of 0 so the entries of the \(E^3\) page are identical to the \(E^2\) page and the differentials of the \(E^r\) page for \(r\ge 4\) are clearly all zero, hence there is an exact sequence
	\[0\rightarrow H^2(B;\Z)\rightarrow H^2(E;\Z)\rightarrow H^0(B;H^2(X;\Z))\rightarrow H^3(B;Z)\rightarrow 0\]
	but \(H^0(B;H^2(X;\Z))\) is just \(H^2(X;\Z)\) since \(B\) is connected and \(H^3(B;Z)\) is assumed to be zero. The exact sequence then reduces to
	\[H^2(B;\Z)\rightarrow H^2(E;\Z)\rightarrow H^0(B;H^2(X;\Z))\rightarrow 0.\]
	Again by the universal coefficient theorem we have \(H^2(X;\Z)\cong\text{Hom}(H_2(X;\Z),\Z)\oplus T_1\) where \(T_1\) is the torsion subgroup of \(H_1(X;\Z)\), but \(T_1=0\) since \(\pi_1(X)=0\). Since \(\text{Hom}(H_2(X;\Z),\Z)\) is free, it follows that \(H^2(X;\Z)\) is free and thus the above exact sequence splits and so
	\[H^2(X;\Z)\cong H^2(B;\Z)\oplus H^2(X;\Z).\]
\end{proof}
One may use this result to obtain the following.
\label{cor:line bundles on the blowup family}
\begin{corollary}
	Let \(X\) be a simply connected K\"ahler surface and \(\pi:Z\rightarrow X\) be the universal blowup family, fix some \(x\in X\) and let \(p:\text{Bl}_x(X)\rightarrow X\) be the natural projection map from the blowup of \(X\) at \(x\in X\), we then have the following isomorphism
	\[H^2(Z;\Z)\cong H^2(X;\Z)\oplus H^2(X;\Z)\oplus \Z\]
	given by \((L_1,L_2,k)\mapsto \pi^*L_1\otimes p^*L_2\otimes \mathcal{O}(kE)\)
\end{corollary}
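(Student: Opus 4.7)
The plan is to reduce the statement to the previous lemma and the standard description of the cohomology of a blowup. First I would verify the hypotheses of the lemma with base $B=X$ and fibre $\mathrm{Bl}_{x}(X)$. The base $X$ is simply-connected by assumption, and $H^{3}(X;\Z)$ vanishes because for a simply-connected compact 4-manifold Poincar\'e duality together with the universal coefficient theorem force $H^{3}(X;\Z)\cong H_{1}(X;\Z)=0$. For the fibre, one needs to note that blowing up a simply-connected complex surface at a point yields a simply-connected surface (e.g. by a Mayer--Vietoris/van Kampen argument using that $\mathrm{Bl}_{x}(X)$ is obtained from $X\setminus\{x\}$ by gluing in a tubular neighbourhood of the exceptional divisor, which is simply-connected).

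Once the hypotheses are verified, applying the previous lemma yields
\[
H^{2}(Z;\Z)\cong H^{2}(X;\Z)\oplus H^{2}(\mathrm{Bl}_{x}(X);\Z).
\]
Then I would invoke the standard blowup formula $H^{2}(\mathrm{Bl}_{x}(X);\Z)\cong H^{2}(X;\Z)\oplus \Z\cdot [E]$, where the first summand is $p^{*}H^{2}(X;\Z)$ and the $\Z$ summand is generated by the Poincar\'e dual of the exceptional divisor. Substituting gives the claimed abstract isomorphism $H^{2}(Z;\Z)\cong H^{2}(X;\Z)\oplus H^{2}(X;\Z)\oplus \Z$.

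The remaining task is to identify the isomorphism with the prescribed geometric map. For the first summand, the lemma's embedding $H^{2}(B;\Z)\hookrightarrow H^{2}(E;\Z)$ is exactly $\pi^{*}$, accounting for the $\pi^{*}L_{1}$ factor. For the second $H^{2}(X;\Z)$ summand and the $\Z$ summand, I would introduce the global projection $P=p_{1}\circ\Pi\colon Z\to X$ (whose fibrewise restriction is $p$), and the global exceptional divisor $\mathcal{E}=\Pi^{-1}(\Delta(X))\subset Z$ (whose fibrewise restriction is $E$). These give classes $P^{*}c_{1}(L_{2})$ and $c_{1}(\mathcal{O}(\mathcal{E}))$ on $Z$ which restrict on each fibre to $p^{*}c_{1}(L_{2})$ and $[E]$ respectively, hence generate complementary copies of $H^{2}(X;\Z)$ and $\Z$ inside $H^{2}(Z;\Z)$ by the Leray--Serre spectral sequence argument of the lemma. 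Taking first Chern classes then translates the direct sum decomposition into the tensor product description $\pi^{*}L_{1}\otimes p^{*}L_{2}\otimes \mathcal{O}(kE)$ in the statement.

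The main obstacle is really just notational: the map $p$ in the statement is defined only on a single fibre, so one must justify that $p^{*}L_{2}$ extends unambiguously to a class on $Z$ via the global projection $P$, and that the exceptional divisor of each fibre extends consistently to the global divisor $\mathcal{E}$. Once these identifications are made, the bookkeeping is straightforward and the isomorphism has the asserted form.
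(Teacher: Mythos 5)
Your proposal is correct and follows essentially the same route as the paper: apply \Cref{prop:H2 of families with simply connected base and fibre} together with the standard blowup formula \(H^2(\text{Bl}_x(X);\Z)\cong H^2(X;\Z)\oplus\Z\). You in fact supply more detail than the paper does, notably the verification of the lemma's hypotheses and the observation that \(p^*L_2\) and \(\mathcal{O}(kE)\) must be realised by the global projection and the global exceptional divisor \(\Pi^{-1}(\Delta(X))\) on \(Z\).
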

\begin{proof}
	Note that for the blowup at a point \(x\in X\) with projection map \(p:\text{Bl}_x(X)\rightarrow X\) one has
	\[H^2(\text{Bl}_x(X);\Z)\cong H^2(X;\Z)\oplus\Z\]
	with the isomorphism specified by \((L,m)\mapsto p^*L\otimes\mathcal{O}(kE)\) where \(k\in \Z\) and \(E\) is the exceptional divisor. Since \(X\) is simply connected, applying \Cref{prop:H2 of families with simply connected base and fibre} one obtains for the universal blowup family \(H^2(Z;\Z)\cong H^2(X;\Z)\oplus H^2(X;\Z)\oplus \Z\) with \((L_1,L_2,k)\mapsto \pi^*L_1\otimes p^*L_2\otimes \mathcal{O}(kE)\).
\end{proof}	
	The restriction \(\pi^*L_1\otimes p^*L_2\otimes \mathcal{O}(kE)\) to a fibre \(Z_x\) is simply \(p^*L_2 \otimes \mathcal{O}(kE)\). Because of \Cref{cor:line bundles on the blowup family}, where simplification is required, we shall denote the corresponding families Seiberg-Witten invariants as \(\FSW_n(Z,L_1,L_2,k)\).
We now present a computation of the families Seiberg-Witten invariants for the universal blowup family when \(X\) is simply connected. 
	\begin{theorem}
		\label{thm:computation of the FSW for the universal blowup family} 
		Let \(X\) be a simply connected compact K\"ahler surface and \(\pi:Z\rightarrow X\) be the universal blowup family and assume the the dimensions \(h^i\) of the cohomology groups \(\{H^i(Z_b,(p^*L_2\otimes \mathcal{O}(kE)))\}_{b\in B}\) are constant over \(b\in B\) for \(i=0,1,2\) so that they define vector bundles \(W^i\) of rank \(h^i\), the families Seiberg-Witten invariants in the K\"ahler chamber for the \(\text{spin}^c\) structure obtained by twisting the canonical structure by the \(\pi^*L_1\otimes p^*L_2\otimes \mathcal{O}(kE)\) are given by the following formulae:
		
		when \(\delta=0\)
		\begin{equation}
			\label{eqn:FSW for the universal blowup family for delta=0}
			\FSW_n(Z,L_1,L_2,k)=(-1)^n\binom{h^1-h^2}{h^1-h^2+\rho_g}
		\end{equation}
		when \(\delta=1\), \(h^1-h^2+\rho_g=0\),
		\begin{equation}
			\label{eqn:FSW for the universal blowup family for delta=1, binom coeff denom is 0}
			\begin{split}
				\FSW_n(Z,L_1,L_2,k)=(-1)^n(s_1(W^0)-h^0c_1(L_1))
			\end{split}
		\end{equation}
		when \(\delta=1\), \(h^1-h^2+\rho_g\ge 1\),
		\begin{equation}
			\label{eqn:FSW for the universal blowup family for delta=1}
			\begin{split}
				\FSW_n(Z,L_1,L_2,k)&=(-1)^n(c_1(W^1)+h^1c_1(L_1))\binom{h^1-h^2-1}{h^1-h^2+\rho_g-1} \\
				&+(-1)^n(s_1(W^0)-h^0c_1(L_1))\binom{h^1-h^2}{h^1-h^2+\rho_g} \\
				&+(-1)^n(s_1(W^2)-h^2c_1(L_1))\binom{h^1-h^2-1}{h^1-h^2+\rho_g-1}
			\end{split}
		\end{equation}
		when \(\delta=2\), \(h^1-h^2+\rho_g=0\),
		\begin{equation}
			\label{eqn:FSW for the universal blowup family for delta=2, binom coeff denom is 0}
			\begin{split}
				\FSW_n(Z,L_1,L_2,k)=(-1)^{n}s_2(W^0\otimes L_1)
			\end{split}
		\end{equation}		
		when \(\delta=2\), \(h^1-h^2+\rho_g=1\), and
		\begin{equation}
			\label{eqn:FSW for the universal blowup family for delta=2, binom coeff denom is 1}
			\begin{split}
				\FSW_n(Z,L_1,L_2,k)&=+(-1)^{n}c_1(W^1\otimes L_1)s_1(W^0\otimes L_1) \\
				&+(-1)^{n}s_2(W^0\otimes L_1)\binom{h^1-h^2}{1} \\
				&+(-1)^{n}s_1(W^2\otimes L_1)s_1(W^0\otimes L_1)
			\end{split}
		\end{equation}
		when \(\delta=2\), \(h^1-h^2+\rho_g\ge 2\)
		\begin{equation}
			\label{eqn:FSW for the universal blowup family for delta=2}
			\begin{split}
				\FSW_n(Z,L_1,L_2,k)&=(-1)^{n}c_2(W^1\otimes L_1)\binom{h^1-h^2-2}{h^1-h^2+\rho_g-2} \\
				&+(-1)^{n}c_1(W^1\otimes L_1)s_1(W^0\otimes L_1)\binom{h^1-h^2-1}{h^1-h^2+\rho_g-1} \\
				&+(-1)^{n}s_1(W^2\otimes L_1)c_1(W^1\otimes L_1)\binom{h^1-h^2-2}{h^1-h^2+\rho_g-2} \\
				&+(-1)^{n}s_2(W^0\otimes L_1)\binom{h^1-h^2}{h^1-h^2+\rho_g} \\
				&+(-1)^{n}s_1(W^2\otimes L_1)s_1(W^0\otimes L_1)\binom{h^1-h^2-1}{h^1-h^2+\rho_g-1} \\
				&+(-1)^{n}s_2(W^2\otimes L_1)\binom{h^1-h^2-2}{h^1-h^2+\rho_g-2}
			\end{split}
		\end{equation}
		when \(\delta=2\), where \(\delta=\rho_g+1-\chi(Z,L)+n\) and \(\chi(Z,L)=h^0-h^1+h^2\). The invariants are zero otherwise.
	\end{theorem}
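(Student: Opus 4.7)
The plan is to specialise Theorem~\ref{thm:general computation of the families Seiberg-Witten invariant} to the universal blowup family. Writing \(L = \pi^*L_1 \otimes p^*L_2 \otimes \mathcal{O}(kE)\), two preliminary identifications are required: one for the cohomology bundles \(V^i\) with fibre \(H^i(Z_b, L|_{Z_b})\), and one for the auxiliary bundle \(H^{2,0}\) with fibre \(H^2(Z_b, \mathcal{O}_{Z_b})\).

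For the first, the projection formula gives
\[V^i \;\cong\; R^i\pi_* L \;\cong\; L_1 \otimes R^i\pi_*(p^*L_2 \otimes \mathcal{O}(kE)) \;\cong\; W^i \otimes L_1,\]
so under the constant-dimension hypothesis one has \(V^i = W^i \otimes L_1\). For the second, I would use the factorisation \(\pi = p_2 \circ \Pi\) where \(\Pi : Z \to X \times X\) is the blowup of the smooth subvariety \(\Delta(X)\) and \(p_2\) is the projection onto the right factor. Since \(R\Pi_* \mathcal{O}_Z = \mathcal{O}_{X \times X}\) for blowups along smooth centers, and then \(R^i(p_2)_* \mathcal{O}_{X \times X} \cong H^i(X, \mathcal{O}_X) \otimes_{\C} \mathcal{O}_X\) by K\"unneth, one obtains that \(H^{2,0}\) is a trivial vector bundle of rank \(\rho_g\) over \(X\). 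In particular \(c_j(H^{2,0}) = 0\) for \(j > 0\), so the outer sum in Theorem~\ref{thm:general computation of the families Seiberg-Witten invariant} collapses to the single term \(m = h^1 - h^2 + \rho_g\), yielding
\[\FSW_n(Z, L_1, L_2, k) \;=\; \Gamma_{h^1-h^2+\rho_g,\,n}.\]

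The remainder of the proof is a direct expansion of \(\Gamma_{m,n}\) for \(\delta \in \{0,1,2\}\). Substituting \(V^i = W^i \otimes L_1\) and applying Equations~\eqref{eqn:expansion of the chern classes under the tensor product of a line bundle} and~\eqref{eqn:expansion of the segre classes under the tensor product of a line bundle} converts the answer into the classes \(c_j(W^i \otimes L_1)\) and \(s_j(W^i \otimes L_1)\) appearing in the statement; in the \(\delta = 1\) sub-cases these are further unwound into \(c_1(W^i) + h^i c_1(L_1)\) and \(s_1(W^i) - h^i c_1(L_1)\). The main obstacle is bookkeeping rather than any new idea: for each \(\delta\) one must split into the subcases dictated by the vanishing of the binomial coefficients \(\binom{h^1-h^2-s}{h^1-h^2+\rho_g-s}\), which vanish precisely when \(h^1-h^2+\rho_g < s\), and check case by case which of the pairs \((i,j)\) in the double sum defining \(\Gamma_{m,n}\) actually survive. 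A careful case-split reproduces the six stated formulas corresponding to \(\delta = 0\); \(\delta = 1\) with \(h^1-h^2+\rho_g = 0\) or \(\geq 1\); and \(\delta = 2\) with \(h^1-h^2+\rho_g = 0, 1,\) or \(\geq 2\).
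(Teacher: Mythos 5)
Your proposal is correct and follows the same overall strategy as the paper: specialise \Cref{thm:general computation of the families Seiberg-Witten invariant}, identify \(V^i\cong W^i\otimes L_1\) by the projection formula, kill all but the top term of the outer sum by showing \(H^{2,0}\) is trivial, and then expand \(\Gamma_{h^1-h^2+\rho_g,\,n}\) for \(\delta\in\{0,1,2\}\) using the tensor-product formulae for Chern and Segre classes. The one genuinely different ingredient is your proof that \(H^{2,0}\) is trivial: the paper's \Cref{lem:H2 is trivial for the blowup bundle} argues fibrewise, using birational invariance of the geometric genus under the blowdown \(\text{Bl}_x(X)\to X\) together with Serre duality on each fibre, whereas you argue globally via \(R\Pi_*\mathcal{O}_Z=\mathcal{O}_{X\times X}\) for the blowup along the smooth centre \(\Delta(X)\) and the base-change/K\"unneth identification \(R^2(p_2)_*\mathcal{O}_{X\times X}\cong H^2(X,\mathcal{O}_X)\otimes_{\C}\mathcal{O}_X\). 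Your route produces the bundle-level trivialisation in one step (and uses the correct factorisation \(\pi=p_2\circ\Pi\); the paper's \(\pi=\Pi\circ p_2\) is a typo), at the price of invoking the derived pushforward statement for blowups along smooth centres rather than the more elementary birational invariance of \(p_g\). One point you should state explicitly to justify the theorem's ``zero otherwise'' clause: the invariants vanish for \(\delta\ge 3\) because \(\Gamma_{m,n}\in H^{2\delta}(X;\Z)\) and the base of the family is the \(4\)-manifold \(X\), so \(H^{2\delta}(X;\Z)=0\) there; this is precisely the observation that limits the case analysis to \(\delta\le 2\) in the first place, and your write-up takes it for granted.
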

Before we begin the computation of the formulae mentioned on the previous page, we present the following lemma.
\begin{lemma}
	\label{lem:H2 is trivial for the blowup bundle}
	The bundle \(H^{2,0}\) with fibres \(H^2(\text{Bl}_x(X),\mathcal{O}_{\text{Bl}_x(X)})\) is trivial for the universal blowup family.
\end{lemma}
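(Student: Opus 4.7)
The plan is to identify the bundle $H^{2,0}$ with a derived pushforward on the total space and then exploit the blowup structure to show that this pushforward is canonically trivial. Write $\tilde\pi = p_2 \circ \Pi : Z \to X$ for the family projection, where $\Pi : Z \to X \times X$ is the blowdown of the diagonal and $p_2$ is the projection onto the second factor. Under the constant-dimension hypothesis (which is automatic here, since $p_g$ is a birational invariant of compact complex surfaces and $Z_b = \mathrm{Bl}_b(X)$ is a blowup of $X$), base change identifies $H^{2,0}$, as a holomorphic and hence smooth vector bundle on $X$, with the coherent sheaf $R^2 \tilde\pi_* \mathcal{O}_Z$.

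Next, I would invoke the standard fact that for the blowup $\Pi : Z \to X \times X$ of a smooth submanifold (here $\Delta(X)$) of a smooth ambient space, one has $\Pi_* \mathcal{O}_Z = \mathcal{O}_{X \times X}$ and $R^i \Pi_* \mathcal{O}_Z = 0$ for $i > 0$, i.e.\ $R\Pi_* \mathcal{O}_Z = \mathcal{O}_{X \times X}$ in the derived category. Composing derived pushforwards then gives
\[
R\tilde\pi_* \mathcal{O}_Z \;=\; R(p_2)_* \, R\Pi_* \mathcal{O}_Z \;=\; R(p_2)_* \mathcal{O}_{X \times X},
\]
so in particular $R^2 \tilde\pi_* \mathcal{O}_Z \cong R^2 (p_2)_* \mathcal{O}_{X \times X}$.

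Finally, applying flat base change (equivalently, the K\"unneth formula) to the trivial fibration $p_2 : X \times X \to X$ yields $R^2 (p_2)_* \mathcal{O}_{X \times X} \cong H^2(X, \mathcal{O}_X) \otimes_{\mathbb{C}} \mathcal{O}_X$, the trivial rank-$p_g$ bundle on $X$. Chaining the identifications shows $H^{2,0} \cong H^2(X, \mathcal{O}_X) \otimes \mathcal{O}_X$, which is trivial.

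I do not foresee any genuine obstacle: the argument is essentially bookkeeping around two well-known inputs, namely the vanishing $R^{>0}\Pi_* \mathcal{O}_Z = 0$ for the blowup of a smooth center, and base change for a trivial projection. The only point that deserves an explicit remark is that the constant-dimension hypothesis needed to identify $H^{2,0}$ with $R^2 \tilde\pi_* \mathcal{O}_Z$ holds automatically in this setting, as noted above.
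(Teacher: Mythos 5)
Your argument is correct, but it takes a genuinely different route from the paper's. The paper works fibrewise: it uses the birational invariance of holomorphic \(2\)-forms to identify \(H^0(\mathrm{Bl}_x(X),K_{\mathrm{Bl}_x(X)})\) with the fixed vector space \(H^0(X,K_X)\) via pullback along \(p_x\), and then applies Serre duality on each fibre to trivialise \(H^{2,0}\). You instead work relatively: you identify \(H^{2,0}\) with \(R^2\tilde\pi_*\mathcal{O}_Z\), kill the blowup via \(R\Pi_*\mathcal{O}_Z=\mathcal{O}_{X\times X}\), and finish with K\"unneth for the trivial projection \(p_2\). The two proofs ultimately rest on closely related facts (the vanishing \(R^{>0}\Pi_*\mathcal{O}_Z=0\) is essentially the sheaf-theoretic form of the birational invariance of \(p_g\)), but your version has the advantage of producing the trivialisation and the constancy of \(\dim H^2(Z_x,\mathcal{O})\) in a single step, and of avoiding Serre duality entirely. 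The one point you pass over quickly is the identification of the smooth vector bundle \(H^{2,0}\) (which in the paper arises from the smoothly varying family of Dolbeault complexes) with the \(C^\infty\) bundle underlying the coherent sheaf \(R^2\tilde\pi_*\mathcal{O}_Z\); this does require Grauert's base-change theorem together with the observation that the resulting holomorphic structure induces the given smooth one, but the paper's own proof makes an analogous implicit smoothness claim, so this is not a substantive gap.
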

\begin{proof}
	\sloppy
	Let \(\pi:Z\longrightarrow X\) be the universal blowup family and \(p_x:\text{Bl}_x(X)\longrightarrow X\) be the induced map from the fibre at \(x\) to \(X\). This map corresponds to the projection map for the blowup of \(X\) at \(x\) and is a birational isomorphism, hence it induces an isomorphism between \(H^0(X,\wedge^{2,0}TX)\) and \(H^0(\text{Bl}_x(X_x),\wedge^{2,0}T\text{Bl}_x(X_x))\) \cite[p. 494]{griffiths_principles_1994}. By Serre duality \(H^0(\text{Bl}_x(X_x),\wedge^{2,0}T\text{Bl}_x(X_x))\cong H^2(\text{Bl}_x(X),\mathcal{O}_{\text{Bl}_x(X)})\), hence \(\pi\) composed with Serre duality on the fibres induces an isomorphism between \(H^{2,0}\) and the trivial bundle over \(X\) with fibres \(H^0(X,\wedge^{2,0}TX)\), thus \(H^{2,0}\) is trivial.
\end{proof}
	\begin{proof}[Proof of \Cref{thm:computation of the FSW for the universal blowup family}]
	Recall from \Cref{thm:general computation of the families Seiberg-Witten invariant} that the families Seiberg-Witten invariants are given by
	\[\FSW_n(Z,\pi^*L_1\otimes p^*L_2\otimes \mathcal{O}(kE))=\sum_{m=0}^{h^1-h^2+\rho_g}c_{h^1-h^2+\rho_g-m}(H^{2,0})\Gamma_{m,n}\]
	where \(H^{2,0}\) is the bundle with fibres \(H^2(\text{Bl}_x(X),\mathcal{O}_{\text{Bl}_x(X)})\), from \Cref{lem:H2 is trivial for the blowup bundle} the bundle \(H^{2,0}\) is trivial, so it follows that the only surviving term in the expression for families Seiberg-Witten invariants occurs when \(m=h^1-h^2+\rho_g\), the invariants are then computed as follows
	\begin{equation}
		\label{eqn:blowup family FSW expression}
		\FSW_n(Z,\pi^*L_1\otimes p^*L_2\otimes \mathcal{O}(kE))=\Gamma_{h^1-h^2+\rho_g,n}
	\end{equation}
	where \(h^i=\text{rank}(V^i)=\text{rank}(H^i(X,\pi^*L_1\otimes p^*L_2\otimes \mathcal{O}(kE)))\).
		
	Recall that the vector bundles \(V^i\) have fibres \(\{H^i(Z_x,(\pi^*L_1\otimes p^*L_2\otimes \mathcal{O}(kE)))\}_{x\in X}\), from the projection formula, one has \(V^i=L_1\otimes W^i\) where \(W^i_x=H^i(Z_x,(p^*L_2\otimes \mathcal{O}(kE))_x)\), so it suffices to assume the dimensions of \(\{H^i(Z_x,(p^*L_2\otimes \mathcal{O}(kE))_x)\}_{x\in X}\) are constant over \(X\) for \(i=0,1,2\) to ensure that the \(W^i\) are actually vector bundles.
	
	Furthermore, since \(X\) is a K\"ahler surface, it only has cohomology up to degree 4, hence any terms of higher degree will not survive. These observations combined with standard formulae for the Chern class of the tensor product of a vector bundle with a line bundle and formula for \(\Gamma_{m,n}\) of \Cref{thm:general computation of the families Seiberg-Witten invariant} result in the expressions contained in the statement of the theorem.
\end{proof}
\newpage
	To obtain more explicit results, one must compute the Chern and Segre classes of the vector bundles \(W^i\). We provide a lemma and a subsequent proposition which allows one to do so in certain special cases. Let \(x\in X\) and for simplicity write \(\widetilde{X}=\text{Bl}_x(X)\), let \(I_x\) denote the ideal sheaf of \(x\), so that if \(k\ge 0\) then \(I^k_x\) is then the sheaf of holomorphic functions on \(X\) vanishing at \(x\) to order at least \(k\). Let \(\widetilde{\mathcal{O}}_x\) denote the skyscraper sheaf at \(x\) with values in \(\C\)                                                                                                                          and \(\mathcal{O}_{\widetilde{X}}(-kE)\) be the sheaf of sections of the line bundle \(\mathcal{O}(-kE)\) corresponding to the divisor \(-kE\). We then have the following lemma on the higher direct image sheaves of \(\mathcal{O}_{\widetilde{X}}(-kE)\) under the projection map map \(\).
	\begin{lemma}
		\label{lem:higher direct image sheaves of divisor}
		Let \(k\ge 0\) and \(p:\tilde{X}\rightarrow X\) be the projection map for the blowup of \(X\) at \(x\in X\), then
		\[p_*\mathcal{O}_{\widetilde{X}}(-kE)\cong I^k_x\]
		and
		\[R^jp_*\mathcal{O}_{\widetilde{X}}(-kE)=0\] for \(j>0\)
	\end{lemma}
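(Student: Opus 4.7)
The plan is to reduce everything to a local computation near $x$. Since $p$ is an isomorphism over $X\setminus\{x\}$, on that open set $p_*\mathcal{O}_{\widetilde{X}}(-kE)$ agrees with $I_x^k = \mathcal{O}_X$ and the higher direct images already vanish, so it suffices to work in a small Stein neighbourhood $U$ of $x$ and to identify stalks at $x$.

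For the first isomorphism, I would start from the standard fact that $p_*\mathcal{O}_{\widetilde{X}} = \mathcal{O}_X$, which holds because $p$ is proper and birational with $X$ smooth (hence normal). Pullback along $p$ then produces a natural injection $I_x^k \hookrightarrow p_*\mathcal{O}_{\widetilde{X}}(-kE)$: a holomorphic function on $U$ vanishing at $x$ to order $\ge k$ pulls back to a section of $\mathcal{O}_{\widetilde{X}}$ on $p^{-1}(U)$ that vanishes along $E$ to order $\ge k$. For surjectivity, any section $s$ of $\mathcal{O}_{\widetilde{X}}(-kE)$ on $p^{-1}(U)$ is in particular a section of $\mathcal{O}_{\widetilde{X}}$, hence descends to some $f\in\mathcal{O}_X(U)$; a direct computation in the standard blow-up chart $(u,v)\mapsto (u,uv)$ shows that the order of vanishing of $p^*f$ along $E$ equals the order of vanishing of $f$ at $x$, so $f\in I_x^k$.

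For the vanishing of $R^jp_*\mathcal{O}_{\widetilde{X}}(-kE)$ when $j>0$, I would apply the theorem on formal functions (or its analytic counterpart via Grauert's coherence theorem) to identify the completed stalk at $x$ with the inverse limit
\[
\varprojlim_n H^j\bigl(E^{(n)},\,\mathcal{O}_{\widetilde{X}}(-kE)|_{E^{(n)}}\bigr),
\]
where $E^{(n)}$ denotes the $n$-th infinitesimal neighbourhood of $E$ in $\widetilde{X}$. An induction on $n$ via the short exact sequence
\[
0 \to \mathcal{O}_E\bigl(-(k+n-1)E\bigr) \to \mathcal{O}_{\widetilde{X}}(-kE)|_{E^{(n)}} \to \mathcal{O}_{\widetilde{X}}(-kE)|_{E^{(n-1)}} \to 0,
\]
together with the identification $\mathcal{O}_E(-mE) \cong \mathcal{O}_{\P^1}(m)$ coming from $E\cdot E = -1$, reduces the problem to the vanishing of $H^j(\P^1,\mathcal{O}_{\P^1}(k+n-1))$ for $j\ge 1$, which holds since $k+n-1\ge 0$. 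The inverse limit is therefore zero, and by coherence of the higher direct image together with faithful flatness of completion the stalk itself vanishes.

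The main obstacle is essentially bookkeeping: one must correctly identify the conormal bundle $\mathcal{O}_E(-E)$ as $\mathcal{O}_{\P^1}(1)$, track the twist by $-kE$ under restriction to each $E^{(n)}$, and invoke the appropriate analytic version of the formal-functions theorem since the paper works in the K\"ahler (complex-analytic) category rather than in the algebraic one. All of these ingredients are standard but have to be assembled carefully.
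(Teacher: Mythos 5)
Your argument is correct, but it takes a genuinely different route from the paper's. The paper proves both claims simultaneously by induction on \(k\): it pushes forward the divisor sequence \(0\to\mathcal{O}_{\widetilde{X}}(-kE)\to\mathcal{O}_{\widetilde{X}}(-(k-1)E)\to\mathcal{O}_E(-(k-1)E)\to 0\), uses the known vanishing \(R^ip_*\mathcal{O}_{\widetilde{X}}=0\) together with the surjectivity of the evaluation-onto-jets map \(p_*\mathcal{O}_{\widetilde{X}}(-(k-1)E)\to\widetilde{\mathcal{O}}_x(S^kT^*_xX)\) to kill \(R^1\), and then identifies \(p_*\mathcal{O}_{\widetilde{X}}(-kE)\) with \(I^k_x\) by comparing this pushed-forward sequence with \(0\to I^k_x\to I^{k-1}_x\to\widetilde{\mathcal{O}}_x(S^kT^*_xX)\to 0\) via the five lemma. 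You instead get the pushforward by a direct chart computation (order of vanishing of \(p^*f\) along \(E\) equals the order of vanishing of \(f\) at \(x\), plus \(p_*\mathcal{O}_{\widetilde{X}}=\mathcal{O}_X\)), and the vanishing of \(R^j\) by the theorem on formal functions, filtering by infinitesimal neighbourhoods \(E^{(n)}\) and reducing to \(H^j(\P^1,\mathcal{O}(k+n-1))=0\). Both are sound. Your chart computation for \(p_*\) is more elementary and self-contained than the paper's diagram chase (though you should check both standard charts, which is routine). For the \(R^j\) vanishing your route is shorter if one is willing to quote formal functions, but, as you note, in the complex-analytic category this requires Grauert's coherence plus a comparison theorem, which is heavier machinery than the paper invokes; the paper's induction stays entirely within the long exact sequence of direct images. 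A further practical advantage of the paper's route is that it produces as a by-product the identification \(\mathcal{O}_X/I^k_x\cong\widetilde{\mathcal{O}}_x(S^{\le(k-1)}T^*_xX)\) and the jet-bundle exact sequences that are reused in \Cref{thm:exact sequences for blowups} and the subsequent computations, whereas your argument would need to establish those separately.
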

	\begin{proof}
		Since \(E\) is compact, any holomorphic function on \(E\) is constant, so \(p_*\mathcal{O}_{\widetilde{X}}\subset\mathcal{O}_X\)
		The exceptional divisor \(E\) then gives an exact sequence 
		\[0\rightarrow \mathcal{O}_{\widetilde{X}}(-E)\rightarrow\mathcal{O}_{\widetilde{X}}\rightarrow \mathcal{O}_E\rightarrow 0.\]
		Note that the higher direct image sheaves satisfy \(R^ip_*\mathcal{O}_{\widetilde{X}}=0\) for \(i>0\) \cite[Theorem 9.1]{barth_compact_2004}, thus the induced long exact sequence of higher direct image sheaves gives the following exact sequence
		\[0\rightarrow p_*\mathcal{O}_{\widetilde{X}}(-E)\rightarrow \mathcal{O}_{X}\xrightarrow{\text{ev}_x}\widetilde{\mathcal{O}}_x\rightarrow R^1p_*\mathcal{O}_{\widetilde{X}}(-E)\rightarrow 0\]
		and \(R^jp_*\mathcal{O}_{\widetilde{X}}(-E)=0\) for \(j>1\). Since the evaluation map \(\text{ev}_x:\mathcal{O}_X\rightarrow \widetilde{\mathcal{O}}_x\) is surjective, \(R^1p_*\mathcal{O}_{\widetilde{X}}(-E)=0\), this reduces the above exact sequence. There also a natural map \(I_x\rightarrow \pi_*\mathcal{O}_{\widetilde{X}}(-E)\), this and the structure sequence of \(X\) gives the following commutative diagram with exact rows
		\[\begin{tikzcd}
			0 & {\pi_*\mathcal{O}_{\widetilde{X}}(-E)} & {\mathcal{O}_{X}} & {\widetilde{\mathcal{O}}_x} & 0 \\
			0 & {I_x} & {\mathcal{O}_{X}} & {\widetilde{\mathcal{O}}_x} & 0
			\arrow[from=2-1, to=2-2]
			\arrow[from=1-1, to=1-2]
			\arrow[from=1-2, to=1-3]
			\arrow[from=1-3, to=1-4]
			\arrow[from=1-4, to=1-5]
			\arrow[hook, from=2-2, to=2-3]
			\arrow[from=2-3, to=2-4]
			\arrow[from=2-4, to=2-5]
			\arrow[no head, from=1-4, to=2-4]
			\arrow[from=2-2, to=1-2]
			\arrow[no head, from=1-3, to=2-3]
		\end{tikzcd}\]
		since the rightmost two arrows are isomorphisms, it follows that \(\pi_*\mathcal{O}_{\widetilde{X}}(-E)\cong I_x\).
		
		Now proceed inductively on \(k\), again the exceptional divisor gives an exact sequence
		\[0\rightarrow \mathcal{O}_{\widetilde{X}}(-kE)\rightarrow \mathcal{O}_{\widetilde{X}}(-(k-1)E)\rightarrow \mathcal{O}_E(-(k-1)E)\rightarrow 0\]
		the long exact sequence of higher direct image sheaves and the inductive hypothesis gives the following long exact sequence
		\[0\rightarrow p_*\mathcal{O}_{\widetilde{X}}(-kE)\rightarrow p_*\mathcal{O}_{\widetilde{X}}(-(k-1)E)\rightarrow p_*\mathcal{O}_{E}(-(k-1)E)\rightarrow R^1\pi_*\mathcal{O}_{\widetilde{X}}(-kE)\rightarrow 0.\]
		and that
		\[R^jp_*(-(k-1)E)\cong R^{j+1}p_*\mathcal{O}_{\widetilde{X}}(-kE)\]
		for \(j>0.\) Since \(\mathcal{O}(E)|_E\cong \mathcal{O}(-1)\) over \(\C\P^1\), it follows that \(\mathcal{O}(-(k-1)E)\cong \widetilde{\mathcal{O}}_x(S^k(T^*_xX))\) and \(R^jp_*\mathcal{O}_E(-(k-1)E)=0\) for \(j>0\).
		
		Consequently \(R^jp_*\mathcal{O}_{\widetilde{X}}(-kE)=0\) for \(j>1\) and the above exact sequence is
		\[0\rightarrow p_*\mathcal{O}_{\widetilde{X}}(-kE)\rightarrow p_*\mathcal{O}_{\widetilde{X}}(-(k-1)E)\rightarrow \widetilde{\mathcal{O}}_x(S^kT^*_xX)\rightarrow R^1p_*\mathcal{O}_{\widetilde{X}}(-kE)\rightarrow 0\]
		The map \(p_*\mathcal{O}_{\widetilde{X}}(-(k-1)E)\rightarrow \widetilde{\mathcal{O}}_x(S^kT^*_xX)\) is surjective since given a choice of local coordinates around \(x\), an element \(f\in S^k(T^*_xX)\) defines a local holomorphic function vanishing to order \(k-1\) at \(x\), thus defining a local section of \(p_*\mathcal{O}_{\widetilde{X}}(-(k-1)E)\) and this evaluates to \(f\) at \(x\). Hence \(R^1p_*\mathcal{O}_{\widetilde{X}}(-kE)=0\) and we have proven the second claim. This also reduces the exact sequence to
		\[0\longrightarrow p_*\mathcal{O}_{\widetilde{X}}(-kE)\rightarrow p_*\mathcal{O}_{\widetilde{X}}(-(k-1)E)\rightarrow \widetilde{\mathcal{O}}_x(S^kT^*_xX)\rightarrow 0.\]	
		A section of \(I^k_x\) can be regarded as a map which vanishes at \(x\) to order at least \(k\), there are natural maps \(I^k_x\longrightarrow p_*\mathcal{O}_{\widetilde{X}}(-kE)\) making the following diagram commute
		\[\begin{tikzcd}
			0 & {I^k_x} & {I^{k-1}_x} & {\widetilde{\mathcal{O}}_x(S^kT^*_xX)} & 0 \\
			0 & {p_*\mathcal{O}_{\widetilde{X}}(-kE)} & {p_*\mathcal{O}_{\widetilde{X}}(-(k-1)E)} & {\widetilde{\mathcal{O}}_x(S^kT^*_xX)} & 0
			\arrow[from=1-1, to=1-2]
			\arrow[from=1-2, to=1-3]
			\arrow[from=1-3, to=1-4]
			\arrow[from=1-4, to=1-5]
			\arrow[from=2-1, to=2-2]
			\arrow[from=2-2, to=2-3]
			\arrow[from=2-3, to=2-4]
			\arrow[from=2-4, to=2-5]
			\arrow[from=1-2, to=2-2]
			\arrow[from=1-4, to=2-4]
			\arrow[from=1-3, to=2-3]
		\end{tikzcd}\]
		where the rows are exact. The rightmost map is the identity and the middle is an isomorphism by the inductive hypothesis. Consequently, the leftmost vertical map is an isomorphism and
		\[p_*\mathcal{O}_{\widetilde{X}}(-kE)\cong I^k_x\]
		so we obtain the first claim by induction.
	\end{proof}
	\begin{theorem}
		\label{thm:exact sequences for blowups}
		Let \(L\) be a holomorphic line bundle on \(X\). For each \(k\ge 0\), the following holds.
		\begin{enumerate}[(1)]
			\item There is a long exact sequence
			\begin{equation*}
				\label{eqn:blowup exact sequence 1}
				\begin{tikzcd}
					0 & {H^0(\widetilde{X},p^*L\otimes \mathcal{O}(-kE))} & {H^0(X,L)} & {L_x\otimes(\mathcal{O}_X/I^k_x)} \\
					& {H^1(\widetilde{X},p^*L\otimes \mathcal{O}(-kE))} & {H^1(X,L)} & 0 & {}
					\arrow[from=1-1, to=1-2]
					\arrow[from=1-2, to=1-3]
					\arrow["{\text{ev}_x}", from=1-3, to=1-4]
					\arrow[from=1-4, to=2-2]
					\arrow[from=2-2, to=2-3]
					\arrow[from=2-3, to=2-4]
				\end{tikzcd}
			\end{equation*}
			\item There is an isomorphism \(H^2(\widetilde{X},p^*L\otimes \mathcal{O}(-kE))\cong H^2(X,L)\)
			\item For \(m\ge 1\), there is a long exact sequence
			\begin{equation*}
				\begin{tikzcd}
					\label{eqn:blowup exact sequence 2}
					0 & {\rightarrow H^1(X,L)} & {H^1(\widetilde{X},p^*L\otimes\mathcal{O}(kE))} & {L_x\otimes (K_X^*)_x\otimes (\mathcal{O}_X/I^k_x)^*} \\
					& {H^2(X,L)} & {H^2(\widetilde{X},p^*L\otimes \mathcal{O}(kE))} & 0 & {}
					\arrow[from=1-1, to=1-2]
					\arrow[from=1-2, to=1-3]
					\arrow[from=1-3, to=1-4]
					\arrow[from=1-4, to=2-2]
					\arrow[from=2-2, to=2-3]
					\arrow[from=2-3, to=2-4]
				\end{tikzcd}
			\end{equation*}
			\item and an isomorphism (for \(m\ge 1\))
			\[H^0(\widetilde{X},p^*L\otimes \mathcal{O}(kE))\cong H^0(X,L)\]
		\end{enumerate}
	\end{theorem}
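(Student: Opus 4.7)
The plan is to reduce parts (3) and (4) to parts (1) and (2) via Serre duality, so the first task is to establish (1) and (2). The key input is \Cref{lem:higher direct image sheaves of divisor}, which gives \(p_*\mathcal{O}_{\widetilde{X}}(-kE)\cong I^k_x\) and \(R^jp_*\mathcal{O}_{\widetilde{X}}(-kE)=0\) for \(j>0\) and \(k\ge 0\).

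For parts (1) and (2), applying the projection formula to the locally free sheaf \(p^*L\) yields \(R^jp_*(p^*L\otimes\mathcal{O}(-kE))\cong L\otimes R^jp_*\mathcal{O}(-kE)\), which equals \(L\otimes I^k_x\) for \(j=0\) and vanishes for \(j\ge 1\). The Leray spectral sequence for \(p\) therefore degenerates at \(E_2\), giving \(H^i(\widetilde{X},p^*L\otimes\mathcal{O}(-kE))\cong H^i(X,L\otimes I^k_x)\) for all \(i\). Tensoring the short exact sequence \(0\to I^k_x\to\mathcal{O}_X\to\mathcal{O}_X/I^k_x\to 0\) with the locally free sheaf \(L\) preserves exactness, and the quotient \(\mathcal{O}_X/I^k_x\) is a skyscraper sheaf supported at \(x\), so its higher cohomology vanishes while its global sections give the stalk \(L_x\otimes(\mathcal{O}_X/I^k_x)\). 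The resulting long exact sequence in cohomology on \(X\) produces the exact sequence of (1) together with the isomorphism (2).

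For (3) and (4), I would use Serre duality on \(\widetilde{X}\). Since \(K_{\widetilde{X}}=p^*K_X\otimes\mathcal{O}(E)\), one has
\[
H^i(\widetilde{X},p^*L\otimes\mathcal{O}(kE))\cong H^{2-i}(\widetilde{X},p^*(K_X\otimes L^*)\otimes\mathcal{O}(-(k-1)E))^*.
\]
For \(k\ge 1\) the exponent \(k-1\) is non-negative, so parts (1) and (2) apply to the line bundle \(K_X\otimes L^*\) with index \(k-1\). Taking \(i=0\) and combining with Serre duality on \(X\), which gives \(H^2(X,K_X\otimes L^*)\cong H^0(X,L)^*\), yields (4). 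For (3), dualize the exact sequence from (1) applied to \(K_X\otimes L^*\) and \(k-1\), reverse the arrows, and identify each term \(H^j(X,K_X\otimes L^*)^*\) with \(H^{2-j}(X,L)\) via Serre duality on \(X\); the skyscraper contribution dualizes to \(L_x\otimes(K_X^*)_x\otimes(\mathcal{O}_X/I^{k-1}_x)^*\), matching the displayed sequence up to the indexing on the ideal.

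The hard part is essentially bookkeeping: tracking the shift by one in the exponent of \(E\) introduced by the canonical bundle formula \(K_{\widetilde{X}}=p^*K_X\otimes\mathcal{O}(E)\) and dualizing the skyscraper term correctly, keeping in mind that \((\mathcal{O}_X/I^{k-1}_x)^*\) is a finite-dimensional vector space supported at \(x\). Once \Cref{lem:higher direct image sheaves of divisor} is in hand, both the Leray/projection-formula reduction and the Serre duality step are entirely routine.
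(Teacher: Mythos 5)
Your proof follows the same route as the paper's: the projection formula combined with \Cref{lem:higher direct image sheaves of divisor}, degeneration of the Leray spectral sequence at \(E_2\), the ideal-sheaf sequence tensored with \(L\) for parts (1) and (2), and Serre duality with \(K_{\widetilde{X}}=p^*K_X\otimes\mathcal{O}(E)\) applied to \(p^*(K_X\otimes L^*)\otimes\mathcal{O}(-(k-1)E)\) for parts (3) and (4). Your observation that the skyscraper term in (3) should involve \(\mathcal{O}_X/I^{k-1}_x\) rather than \(\mathcal{O}_X/I^k_x\) is correct and is consistent with how the paper later uses the sequence (e.g.\ the \(J^{k-2}(L_2\otimes K_X^*)^*\) term in \Cref{sec:blowup family with k>0}), so this is a typo in the theorem statement rather than a gap in your argument.
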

	\begin{proof}
	Since \(L\) is locally trivial, it corresponds to a locally free sheaf, hence via the projection formula, it follows that
	\[R^qp_*\mathcal{O}_{\widetilde{X}}(-kE)\otimes \mathcal{O}_X(L)\cong R^q p_*(\mathcal{O}_{\widetilde{X}}(-kE)\otimes \mathcal{O}_{\widetilde{X}}(L))=R^q\pi_*\mathcal{O}_{\widetilde{X}}(p^*L\otimes\mathcal{O}(-kE))\]
	Consequently, from \Cref{lem:higher direct image sheaves of divisor} we have that
	\[R^qp_*\mathcal{O}_{\widetilde{X}}(p^*L\otimes\mathcal{O}(-kE))=0\]
	for \(q>0\), and 
	\[p_*\mathcal{O}_{\widetilde{X}}(p^*L\otimes\mathcal{O}(-kE))=\mathcal{O}_X(L)\otimes_{\mathcal{O}_X}I^k_x\]
	in the case when \(q=0\).
	
	The \(E_2\) page of the Leray spectral sequence for \(\mathcal{O}_{\widetilde{X}}(p^*L\otimes \mathcal{O}(-kE))\) under the map \(p:\widetilde{X}\longrightarrow X\) is given by \(E_2^{m,n}=H^m(X,R^np_*\mathcal{O}_{\widetilde{X}}(p^*L\otimes\mathcal{O}(-kE)))\), this converges to \(H^{m+n}(\widetilde{X},\mathcal{O}_{\widetilde{X}}(p^*L\otimes\mathcal{O}(-kE)))\equiv H^{m+n}(\widetilde{X},p^*L\otimes \mathcal{O}(-kE))\). However, for any \(m\), \(E_2^{m,n}\) is zero for \(n>0\) by the previous computation, hence the spectral sequence degenerates at the \(E_2\) level and so	
	\[H^j(\widetilde{X},p^*L\otimes\mathcal{O}(-kE))\cong H^j(X,p_*\mathcal{O}_{\widetilde{X}}(p^*L\otimes\mathcal{O}(-kE)))\cong H^j(X,\mathcal{O}_X(L)\otimes_{\mathcal{O}_X}I^k_x).\]
	
	Since \(\mathcal{O}_X(L)\) is a locally free sheaf, by tensoring the exact sequence defining the quotient \(\mathcal{O}_X/I^k_x\) the following sequence is exact.
	\[0\rightarrow \mathcal{O}_X(L)\otimes_{\mathcal{O}_X}I^k_x\rightarrow \mathcal{O}_X(L)\rightarrow L_x\otimes (\mathcal{O}_X/I^k_x)\rightarrow 0.\]
	Taking the induced long exact sequence in cohomology and using the fact that \(\mathcal{O}_X/I^k_x\) has no cohomology in degrees bigger than zero since it is a skyscraper sheaf, \((1)\) and \((2)\) immediately follow. If we then consider \((1)\) and \((2)\) for line bundles over \(\widetilde{X}\) of the form \(p^*L\otimes p^*K_X^*\otimes\mathcal{O}(-(k-1)E)\) when \(k\ge 1\), then \((3.)\) and \((4.)\) follow from applying Serre duality, the fact that \(K_{\widetilde{X}}=p^*K_X\otimes\mathcal{O}(E)\) and dualising the resulting exact sequence and isomorphism respectively.
	\end{proof}
	Since there are two different sets of exact sequences and isomorphisms pertaining to the cohomology of line bundles over the blowup of \(X\), we hereafter set \(k\ge 0\) and write a line bundle on \(\tilde{X}=\text{Bl}_x(X)\) as \(p^*L_2\otimes \mathcal{O}(\pm kE)\).
	
	\subsection{Line bundles of the form \(\mathbf{\pi^*L_1\otimes p^*L_2\otimes \mathcal{O}(-kE)}\)}
	\label{sec:blowup family with k<0}
	In this section we consider the families Seiberg-Witten invariants for \(\text{spin}^c\) structures determined by a line bundle on the universal blowup family of the form \(\pi^*L_1\otimes p^*L_2\otimes \mathcal{O}(-kE)\) where \(k\ge 0\). Recall from \Cref{thm:exact sequences for blowups}, there is a fibrewise isomorphism
	\[H^2(Z_x,p^*L_2\otimes\mathcal{O}(-kE))\cong H^2(X,L_2)\]
	hence it defines a trivial vector bundle \(W^2=H^2(Z,p^*L_2\otimes\mathcal{O}(-kE))\) with fibres \(H^2(X,L_2)\). We also have the following exact sequences of vector spaces on each fibre
	\begin{equation}
		\label{eqn:families blowup exact sequence}
		\begin{tikzcd}
			0 & {H^0(Z_x,p^*L_2\otimes\mathcal{O}(-kE))} & {H^0(X,L_2)} & {L_{2,x}\otimes(\frac{\mathcal{O}_X}{I^k_x})} \\
			& {H^1(Z_x,p^*L_2\otimes\mathcal{O}(-kE))} & {H^1(X,L_2)} & 0
			\arrow[from=1-1, to=1-2]
			\arrow[from=1-2, to=1-3]
			\arrow["{\text{ev}_x}", from=1-3, to=1-4]
			\arrow[from=1-4, to=2-2]
			\arrow[from=2-2, to=2-3]
			\arrow[from=2-3, to=2-4]
		\end{tikzcd}.
	\end{equation}
	There is an isomorphism \(\mathcal{O}_X/I^k_x\cong \widetilde{\mathcal{O}}_x(S^{\le (k-1)}(T_x^*X))\) where \(\widetilde{\mathcal{O}}_x(S^{\le (k-1)}(T_x^*X))\) is the skyscraper sheaf at \(x\) with values in \(S^{\le (k-1)}(T_x^*X)\) and \(S^{\le (k-1)}(T_x^*X)\) is the \(k-1\)-th symmetric power of \(T_x^*X\). Since given a vector space \(V\), the space \(S^k(V^*)\) can be identified with the degree \(k\) polynomials on \(V\) and we have an isomorphism given by a local chart containing \(p\), \(J^k_x\cong \C[z_1,z_2]/(z_1-p_1,z_2-p_2)^{k+1}\) where \(J^k\) denotes the \(k\)-th jet bundle of \(X\). It follows that  \(\mathcal{O}_X/I^k_x\cong J^{k-1}_x\) and so \(L_{2,x}\otimes \mathcal{O}_X/I^k_x=J^{k-1}_x(L_2)\), where \(J^{k-1}(L_2)\) is the \(k-1\)-th jet bundle of \(L_2\).
	
	Assume the dimensions of the cohomology groups \(\{H^i(Z_x,p^*L_2\otimes\mathcal{O}(-kE))\}_{x\in X}\) are constant so that they define vector bundles \(W^i\), a priori we have not established this, although we shall only be concerned with computations in cases where this can be established from the exact sequence of vector spaces. Proceeding with this assumption, there is an exact sequence of vector bundles over \(X\)
	\[\begin{tikzcd}
		0 & W^0 & {H^0(X,L_2)} & {J^{k-1}(L_2)} \\
		& W^1 & {H^1(X,L_2)} & 0
		\arrow[from=1-1, to=1-2]
		\arrow[from=1-2, to=1-3]
		\arrow[from=1-3, to=1-4]
		\arrow[from=1-4, to=2-2]
		\arrow[from=2-2, to=2-3]
		\arrow[from=2-3, to=2-4]
	\end{tikzcd}.\]
	Note that the rank of \(J^{k-1}(L_2)\) is easily shown to be \(\text{dim}(S^{\le (k-1)}(T_x^*X))=\sum_{j=0}^{k-1}(j+1)=k(k+1)/2\) and the ranks of \(W^i\) and \(V^i\) are identical. It then follows from the above exact sequence that if \(h^i=\text{rank}(W^i)\) and \(p^i=\text{dim}(H^i(X,L_2))\)
	\[h^0-p^0+k(k+1)/2-h^1+p^1=0.\]
	Since the fibrewise dimension of the families Seiberg-Witten moduli space is \(h^0-1\), the families Seiberg-Witten invariants are necessarily zero unless
	\[h^0=p^0+h^1-p^1-k(k+1)/2>0.\]
	We now shall consider some special cases in further detail.
	
	\begin{example} \label{ex:k=0 case}
	An easy computation can be obtained in the case when \(k=0\). When \(k=0\) \Cref{thm:exact sequences for blowups} implies that there are fibrewise isomorphisms \(W^i\cong H^i(X,L_2)\) for \(i=0,1\). It follows that the \(W^i\) are all trivial vector bundles with rank \(h^i=p^i(L_2)\), thus \(V^i\cong L_1^{\oplus p^i}\) and so the invariant is zero unless \(p^0>0\), another immediate consequence is that \(\delta=\rho_g+1-\chi(X,L_2)+n\).
	If \(\delta=0\) then the families invariants coincides with the ordinary Seiberg-Witten invariant of \(X\). When \(\delta=1\), since the \(W^i\) are trivial, that the invariant is given by
	\begin{equation}
		\label{eqn:FSW blowup for trivial W and delta=1 triv}
		\begin{split}
			\FSW(Z,L_1,L_2)=(-1)^{n+1}p^0c_1(L_1)
		\end{split}
	\end{equation}
when \(p^1-p^2+\rho_g=0\), and
	\begin{equation}
		\label{eqn:FSW blowup for trivial W and delta=1}
		\begin{split}
			\FSW(Z,L_1,L_2)&=(-1)^{n}(p^1-p^2)c_1(L_1)\binom{p^1-p^2-1}{p^1-p^2+\rho_g-1} \\
			&+(-1)^{n+1}p^0c_1(L_1)\binom{p^1-p^2}{p^1-p^2+\rho_g}
		\end{split}
	\end{equation}
when \(p^1-p^2+\rho_g>0\), where we have expanded terms for the Chern and Segre classes of \(W^i\otimes L\) in \Cref{thm:computation of the FSW for the universal blowup family}. For \(\delta=2\) the invariant is given by the same expressions as in \Cref{thm:computation of the FSW for the universal blowup family} with \(h^i=p^i\), this could be made further explicit by expanding the tensor product terms for the Chern and Segre classes, although we avoid doing so for simplicity.
\end{example}
	
\label{ex:k=1 case}
\begin{example} 
	It is instructive to consider the computation for the invariants when for a line bundle on the universal blowup family of the form \(\pi^*L_1\otimes p^*L_2\otimes \mathcal{O}(-E)\), since it indicates the assumptions required to be able to compute the Chern and Segre classes of the vector bundles \(W^i\) more generally. Because of our choice of \(k\), it follows that \(\mathcal{O}_X/I_x\cong \mathcal{O}_x(\C)\), consequently by \Cref{thm:exact sequences for blowups}, the vector bundle \(W^2\) is trivial with fibres \(H^2(X,L_2)\) and we obtain the following exact sequence of vector spaces
	\[
	\begin{tikzcd}
		0 & {H^0(Z_x,p^*L_2\otimes\mathcal{O}(-kE))} & {H^0(X,L_2)} & {L_{2,x}} \\
		& {H^1(Z_x,p^*L_2\otimes\mathcal{O}(-kE))} & {H^1(X,L_2)} & 0
		\arrow[from=1-1, to=1-2]
		\arrow[from=1-2, to=1-3]
		\arrow[from=1-3, to=1-4]
		\arrow[from=1-4, to=2-2]
		\arrow[from=2-2, to=2-3]
		\arrow[from=2-3, to=2-4]
	\end{tikzcd}
	\]
	We shall make a further assumption so that an explicit computation can be made, suppose that the line bundle \(L_2\) is \textit{basepoint-free}, that is, for each \(x\in X\) there exists a non-zero holomporhic section of \(L_2\) with \(s(x)\neq 0\), then it necessarily follows that the map \(H^0(X,L_2)\longrightarrow L_{2,x}\) is surjective for all \(x\) and dimensions of the cohomology groups are then constant along \(b\). Consequently, by exactness \(H^1(Z_b,L_2\otimes \mathcal{O}(-E))\cong H^1(X,L_2)\) so \(W^1\) is also trivial and the following sequence of vector bundles over \(X\) is exact
	\[0\rightarrow W^0\rightarrow H^0(X,L_2)\rightarrow L_2\rightarrow 0.\]
	It follows that \(h^0=p^0-1\), hence the invariant is zero unless \(p^0=\text{dim}(H^0(X,L_2))>1\), since \(H^0(X,L_2)\) is a trivial vector bundle, we obtain an expression from the above exact sequence in the total Chern classes, namely \(c(L_2)c(W^0)=1\). Hence \(s(W^0)=c(L_2)\), it follows that \(s_1(W^0\otimes L_1)=c_1(L_2)+
	(1-p^0)c_1(L_1)\) and \(s_2(W^0\otimes L_1)=c_1(L_1)^2p^0(p^0-1)/2-p^0c_1(L_1)c_1(L_2)\). One may then use these expressions in conjunction with the formulae of \Cref{thm:computation of the FSW for the universal blowup family} to obtain expressions for the families Seiberg-Witten invariants.
	
	The expression of the invariant when \(\delta=1\) is then following
	\begin{equation}
		\label{eqn:FSW for blowup when k=1 and delta=1}
		\begin{split}
			\FSW(Z,L_1,L_2,-1)=(-1)^{n}(c_1(L_2)-(p^0-1)c_1(L_1))
		\end{split}
	\end{equation}
when \(p^1-p^2+\rho_g=0\), and is
	\begin{equation}
		\label{eqn:FSW for blowup when k=1 and delta=1 rho 0}
		\begin{split}
			\FSW(Z,L_1,L_2,-1)&=(-1)^{n}p^1c_1(L_1)\binom{p^1-p^2-1}{p^1-p^2+\rho_g-1} \\
			&+(-1)^{n}(c_1(L_2)-(p^0-1)c_1(L_1))\binom{p^1-p^2}{p^1-p^2+\rho_g} \\
			&+(-1)^{n+1}p^2c_1(L_1)\binom{p^1-p^2-1}{p^1-p^2+\rho_g-1}
		\end{split}
	\end{equation}
for \(p^1-p^2+\rho_g>0\). When \(\delta=2\) we have
\begin{equation}
	\label{eqn:FSW for blowup when k=1 and delta=2 rho pos}
	\begin{split}
		\FSW(Z,L_1,L_2,-1)=(-1)^{n}\left(\frac{p^0(p^0-1)}{2}c_1(L_1)^2-p^0c_1(L_1)c_1(L_2)\right)
	\end{split}
\end{equation}
when \(p^1-p^2+\rho_g=0\),
\begin{equation}
	\label{eqn:FSW for blowup when k=1 and delta=2}
	\begin{split}
		\FSW(Z,L_1,L_2,-1)&=(-1)^{n+1}p^1(c_1(L_1)c_1(L_2)+(1-p^0)c_1(L_1)^2) \\
		&+(-1)^{n}\left(\frac{p^0(p^0-1)}{2}c_1(L_1)^2-p^0c_1(L_1)c_1(L_2)\right)(p^1-p^2) \\
		&+(-1)^{n+1}p^2(c_1(L_1)c_1(L_2)+(1-p^0)c_1(L_1)^2)
	\end{split}
\end{equation}
when \(p^1-p^2+\rho_g=1\) and
	\begin{equation}
		\label{eqn:FSW for blowup when k=1 and delta=2 rho 1}
		\begin{split}
			\FSW(Z,L_1,L_2,-1)&=(-1)^{n}\frac{p^1(p^1-1)}{2}c_1(L_1)^2\binom{p^1-p^2-2}{p^1-p^2+\rho_g-2} \\
			&+(-1)^{n+1}p^1(c_1(L_1)c_1(L_2)+(1-p^0)c_1(L_1)^2)\binom{p^1-p^2-1}{p^1-p^2+\rho_g-1} \\
			&+(-1)^{n+1}p^2p^1c_1(L_1)^2\binom{p^1-p^2-2}{p^1-p^2+\rho_g-2} \\
			&+(-1)^{n}\left(\frac{p^0(p^0-1)}{2}c_1(L_1)^2-p^0c_1(L_1)c_1(L_2)\right)\binom{p^1-p^2}{p^1-p^2+\rho_g} \\
			&+(-1)^{n+1}p^2(c_1(L_1)c_1(L_2)+(1-p^0)c_1(L_1)^2)\binom{p^1-p^2-1}{p^1-p^2+\rho_g-1} \\
			&+(-1)^{n}\frac{p^2(p^2-1)}{2}c_1(L_1)^2\binom{p^1-p^2-2}{p^1-p^2+\rho_g-2}.
		\end{split}
	\end{equation}
for \(p^1-p^2+\rho_g>1\).
\end{example}

	\begin{remark}\label{rem:general k}
	Via similar assumptions to the case when \(k=1\), one can in principle, make computations for the families Seiberg-Witten invariants of the universal blowup family for a line bundle of the form \(\pi^*L_1\otimes p^*L_2\otimes \mathcal{O}(-kE)\). Recall the exact sequence of vector spaces
	\[\begin{tikzcd}
		0 & {H^0(Z_b,p^*L_2\otimes\mathcal{O}(-kE))} & {H^0(X,L_2)} & {L_{2,x}\otimes(\frac{\mathcal{O}_X}{I^k_x})} \\
		& {H^1(Z_b,p^*L_2\otimes\mathcal{O}(-kE))} & {H^1(X,L_2)} & 0
		\arrow[from=1-1, to=1-2]
		\arrow[from=1-2, to=1-3]
		\arrow["{\text{ev}_x}", from=1-3, to=1-4]
		\arrow[from=1-4, to=2-2]
		\arrow[from=2-2, to=2-3]
		\arrow[from=2-3, to=2-4]
	\end{tikzcd}\]
	and the isomorphism \(\mathcal{O}_X/I^k_x\cong \mathcal{O}_x(S^{\le (k-1)}(T^*_xX))\). 
	
	If we assume as in the \(k=1\) case that the map in the long exact sequence induced by the evaluation map is surjective for each \(x\in X\). Then by identical reasoning one obtains that \(W^1\) is trivial and there is an exact sequence of vector bundles
	\[0\rightarrow H^0(Z,p^*L_2\otimes\mathcal{O}(-E))\rightarrow H^0(X,L_2)\rightarrow J^{k-1}(L_2)\rightarrow 0.\]
	it follows that \(s(W^0)=c(J^{k-1}(L))\). There is also an exact sequence
	\[0\rightarrow S^k(T^*X)\rightarrow J^k\rightarrow J^{k-1}\rightarrow 0\]
	with the map on the right being the obvious projection map sending a \(k\)-jet to its corresponding \(k-1\) jet and the map on the left is the inclusion of degree \(k\)-polynomials. Tensoring by \(L_2\) induces the following exact sequence
	\[0\rightarrow S^q(T^*X)\otimes L_2\rightarrow J^q(L_2)\rightarrow J^{q-1}(L_2)\rightarrow 0\]
	from which the total Chern class of the \(q\)-th jet bundle can be inductively computed as
	\[c(J^q(L_2))=c(S^q(T^*X)\otimes L_2)c(J^{q-1}(L_2))\]
	consequently, one may compute the Segre classes \(s_j(W^0\otimes L_2)\) and use the formulae of \Cref{thm:computation of the FSW for the universal blowup family} to obtain an expression for the invariants.
\end{remark}
	
	\subsection{Line bundles of the form \(\mathbf{\pi^*L_1\otimes p^*L_2\otimes \mathcal{O}(kE)}\)\label{sec:blowup family with k>0}}
	 We briefly discuss some means to compute the Chern and Segre classes of the line bundles \(W^i\) for the universal blowup family for a line bundle of the form \(\pi^*L_1\otimes p^*L_2\otimes \mathcal{O}(kE)\), although this proceeds similarly to \Cref{sec:blowup family with k<0} since the exact sequences involved are simply obtained by applying Serre duality to the first exact sequence of \Cref{thm:exact sequences for blowups} for a line bundle of the form \(p^*L_2^*\otimes p^*K_X\otimes\mathcal{O}(-(k-1)E)\) and dualising the resulting exact sequence.
	 Recall that there is an isomorphism \(W^0\cong H^0(X,L_2)\) and an exact sequence
	 \[
	 \begin{tikzcd}
	 	0 & {\rightarrow H^1(X,L_2)} & {H^1(Z_x,p^*L_2\otimes\mathcal{O}(kE))} & {L_x\otimes (K_X^*)_x\otimes (\mathcal{O}_X/I^k_x)^*} \\
	 	& {H^2(X,L_2)} & {H^2(Z_x,p^*L_2\otimes \mathcal{O}(kE))} & 0 & {}
	 	\arrow[from=1-1, to=1-2]
	 	\arrow[from=1-2, to=1-3]
	 	\arrow[from=1-3, to=1-4]
	 	\arrow[from=1-4, to=2-2]
	 	\arrow[from=2-2, to=2-3]
	 	\arrow[from=2-3, to=2-4]
	 \end{tikzcd}
	 \]
	 hence \(W^0\) is trivial and the invariants are zero unless \(\text{dim}(H^0(X,L_2))>0\). We may then obtain computations in special cases as before.
	 \begin{example}
	 If \(k=1\), then we are in an identical situation as the \(k=0\) case of \Cref{sec:blowup family with k<0}. That is, we have that \(W^1\cong H^1(X,L_2)\) and \(W^2\cong H^2(X,L_2)\), thus the vector bundles \(W^i\) are all trivial so one obtains an identical computation of the invariants as before in \Cref{ex:k=0 case}.
	 \end{example}
 	More generally, provided, the dimensions of \(H^i(Z_x,p^*L_2\otimes\mathcal{O}(kE))\) are constant, we obtain an exact sequence of vector bundles
 \[0\rightarrow H^1(X,L_2)\rightarrow W^1\rightarrow J^{k-2}(L_2\otimes K_X^*)^*\rightarrow H^2(X,L_2)\rightarrow W^2\rightarrow 0\]
 Recall from the proof of \Cref{thm:exact sequences for blowups}, that the fibrewise version of this exact sequence is obtained by applying Serre duality and dualising the first exact sequence of \Cref{thm:exact sequences for blowups} for a line bundle of the form \(p^*L_2^*\otimes p^*K_X\otimes\mathcal{O}(-(k-1)E)\). Consequently, we may apply similar assumptions to those in the previous section to the line bundle \(L_2\otimes K_X^*\) to obtain a reduction of the exact sequence. 
 \begin{example}
 	As an example, if \(L_2^*\otimes K_X\) is basepoint free it follows that \(W^2\cong H^2(X,L_2)\) and we obtain that \(H^1(Z_x,p^*L_2\otimes\mathcal{O}(kE))\) has constant dimension from the fibrewise exact sequence. Thus there is an exact sequence of vector bundles for \(k=2\)
 	\[0\rightarrow L_2\otimes K_X^*\rightarrow  W^2\rightarrow H^2(X,L_2) \rightarrow 0\]
 	thus \(c_i(L_2\otimes K_X^*)=c_i(W^2)\) and one obtains formulae for the families Seiberg-Witten invariants similar to \Cref{ex:k=1 case} with \(L_2\) replaced by \(L_2\otimes K_X^*\) and the \(p^0,p^2\) factors not inside the binomial coefficients interchanged.
 	
 	 Similar to \Cref{sec:blowup family with k<0}, if we assume more generally that the map \(H^0(X,L_2^*\otimes K_X)\rightarrow (L_2^*\otimes K_X)_x\otimes \frac{\mathcal{O}_X}{I^k_x}\) is surjective for each \(x\), then by applying Serre duality and dualising the resulting exact sequence, one obtains that the dimensions of the relevant cohomology groups are constant, thus defining vector bundles \(W^i\) where \(W^1\) is trivial, we also obtain the following exact sequence of vector bundles
 	 \[0\rightarrow J^{k-2}(L_2\otimes K_X^*)\rightarrow  W^2\rightarrow H^2(X,L_2) \rightarrow 0.\]
 	 One may use this to compute the Chern and Segre classes of \(W^2\) in terms of those of \(J^{k-2}(L_2^*\otimes K_X)\), thus the families Seiberg-Witten invariants may then be computed in principle as discussed in \Cref{rem:general k}.
 \end{example}
	
	
	
	
	\printbibliography
\end{document}